\DeclareFontFamily{U}{mathx}{\hyphenchar\font45}
\DeclareFontShape{U}{mathx}{m}{n}{
      <5> <6> <7> <8> <9> <10>
      <10.95> <12> <14.4> <17.28> <20.74> <24.88>
      mathx10
      }{}
\DeclareSymbolFont{mathx}{U}{mathx}{m}{n}
\DeclareMathAccent{\widecheck}{0}{mathx}{"71}
\DeclareMathAccent{\wideparen}{0}{mathx}{"75}
\newcommand{\fE}{\mathcal{E}}
\newcommand{\fI}{\mathcal{I}}
\newcommand{\C}{\mathbb{C}}
\newcommand{\R}{\mathbb{R}}
\newcommand{\bbS}{\mathbb{S}}
\newcommand{\supp}{\operatorname{supp}}
\newcommand{\fP}{\mathcal{P}}
\newcommand{\fQ}{\mathcal{Q}}
\newcommand{\dist}{\operatorname{dist}}
\newcommand{\ang}{\operatorname{ang}}
\newcommand{\fru}{\operatorname{fru}}
\newcommand{\ellip}{\operatorname{ell}}
\newcommand{\llangle}{{\langle\!\langle}}
\newcommand{\rrangle}{{\rangle\!\rangle}}
\newcommand{\bN}{\boldsymbol{N}}
\def\XXint#1#2#3{{\setbox0=\hbox{$#1{#2#3}{\int}$}
     \vcenter{\hbox{$#2#3$}}\kern-.5\wd0}}
\newtheorem{theorem}{Theorem}[section]
\newtheorem{proposition}[theorem]{Proposition}
\newtheorem{lemma}[theorem]{Lemma}
\newtheorem{conjecture}[theorem]{Conjecture}
\newtheorem{corollary}[theorem]{Corollary}
\theoremstyle{definition}
\numberwithin{equation}{section}
\begin{document}
\title{Global restriction estimates for elliptic hyperboloids}
\address{Department of Mathematics, University of Wisconsin, Madison}
\email{bbruce@math.wisc.edu}
\author{Benjamin Baker Bruce}
\date{\today}
\maketitle

\begin{abstract}
We prove global Fourier restriction estimates for elliptic, or two-sheeted, hyperboloids of arbitrary dimension $d \geq 2$, extending recent joint work with Oliveira e Silva and Stovall.  Our results are unconditional in the (adjoint) bilinear range, $q > \frac{2(d+3)}{d+1}$, and extend conditionally upon further progress toward the local restriction conjecture for elliptic surfaces. 
\end{abstract}

\section{Introduction}\label{intro}
In this article, we establish global Fourier restriction estimates for elliptic, or two-sheeted, hyperboloids of arbitrary dimension $d \geq 2$.  These surfaces take the general form
\begin{align}\label{generic hyperboloid}
\{(\tau,\xi) \in \R \times \R^d \colon (\tau - \tau_0, \xi - \xi_0) \cdot A (\tau-\tau_0,\xi-\xi_0) = 1\},
\end{align}
where $(\tau_0,\xi_0) \in \R \times \R^d$ and $A$ is an invertible $(d+1) \times (d+1)$ matrix with exactly one positive eigenvalue.  Due to the affine invariance of restriction estimates and time-reversal symmetry, we may (and will) restrict our attention to the surface 
\begin{align*}
\Sigma \colonequals \{(\tau,\xi) \in \R \times \R^d \colon \tau = \langle \xi \rangle\}, \quad\quad\quad \langle \xi \rangle \colonequals \sqrt{|\xi|^2 + 1},
\end{align*}
which is the ``upper sheet" of \eqref{generic hyperboloid} with $(\tau_0,\xi_0) = (0,0)$ and $A = \operatorname{diag}(1,-1,\ldots,-1)$.

We begin by describing the context for this project.  While certain aspects of the restriction theory for hyperboloids have already been studied, see e.g.~\cite{Strichartz}, \cite{Quilodran}, \cite{Carneiro--Oliveira e Silva--Sousa}, \cite{Carneiro--Oliveira e Silva--Sousa--Stovall}, the question of the optimal range of global estimates was only very recently taken up by Oliveira e Silva, Stovall, and the author in \cite{Bruce--Oliveira e Silva--Stovall}.  There, the hyperbolic, or one-sheeted, hyperboloid in three ambient dimensions was studied.  The present article generalizes certain techniques from \cite{Bruce--Oliveira e Silva--Stovall} to obtain global restriction estimates for higher-dimensional hyperboloids.  As noted above, our results will be stated and proved for elliptic hyperboloids, whose local restriction theory has been well studied (see e.g.~\cite{Tao}, \cite{Hickman--Rogers}); however, similar methods could potentially yield purely conditional results for hyperbolic hyperboloids.

Hyperboloids are geometrically interesting from the viewpoint of restriction theory. Historically, a significant proportion of the work on restriction has focused on compact surfaces, such as the unit sphere or the truncated paraboloid.  Indeed, a general form of the restriction conjecture asserts, in part, that every smooth compact surface with at least one nonvanishing principal curvature admits some nontrivial restriction estimate.  As is well known, however, homogeneity can sometimes be substituted for compactness.  Paraboloids and cones are prototypical examples of noncompact surfaces that obey homogeneity relations and admit nontrivial restriction estimates.  While hyperboloids are not homogeneous in this sense, they come close by ``interpolating" paraboloids and cones:  the surface $\Sigma$, for example, resembles the paraboloid $\tau = \frac{1}{2}|\xi|^2 +1$ as $|\xi| \rightarrow 0$ and the cone $\tau = |\xi|$ as $|\xi| \rightarrow \infty$.  As we will show, hyperboloids appear to admit a range of restriction estimates that interpolates (in a precise sense) the restriction conjectures for paraboloids and cones, and our proof will rest on adaptations of the bilinear restriction theories associated to those surfaces.   Also crucial to our arguments will be the invariance of hyperboloids under appropriately defined Lorentz transformations.  Certain of these transformations, sometimes termed ``orthochronous," additionally preserve each sheet of the two-sheeted hyperboloid.  The orthochronous Lorentz group that acts (transitively) on $\Sigma$ consists of linear maps on $\R \times \R^d$ that preserve the quadratic form $(\tau,\xi) \mapsto \tau^2 - |\xi|^2$ as well as $(\tau,\xi) \mapsto \operatorname{sign} \tau$.  Throughout this article, ``Lorentz" will always mean ``orthochronous Lorentz," so that Lorentz transformations are symmetries of $\Sigma$.

We now turn to the basic definitions and statements of our results.    To begin, we equip $\Sigma$ with its unique Lorentz-invariant measure $\mu$, given by
\begin{align*}
\int_{\Sigma} f d\mu \colonequals \int_{\R^d}f(\langle \xi \rangle, \xi)\frac{d\xi}{\langle \xi \rangle}.
\end{align*}
The Gaussian curvature of $\Sigma$ at the point $(\langle \xi \rangle,\xi)$ is equal to $\langle \xi \rangle^{-d-2}$, and thus $\mu$ coincides with the so-called affine surface measure on $\Sigma$.  The role of $\mu$ is twofold:  to respect the symmetries of our surface and to compensate for its degenerating curvature.  As is standard, we will formulate our results in terms of the adjoint restriction, or extension, operator.   Having equipped $\Sigma$ with $\mu$, this operator takes the form
\begin{align*}
\fE f(t,x) \colonequals \widecheck{f d\mu}(t,x) = \int_{\Sigma} e^{i(t,x)\cdot(\tau,\xi)}f(\tau,\xi)d\mu(\tau,\xi)
\end{align*}
for $f$ continuous and compactly supported.  Henceforth, the dual terms ``restriction" and ``extension" will be used interchangeably.  We denote by
\begin{align*}
\Sigma_0 \colonequals \{(\tau,\xi) \in \Sigma \colon |\xi| \leq 2\} \quad\quad\text{and}\quad\quad \fE_0 f \colonequals \fE(f\chi_{\Sigma_0})
\end{align*}
the low-frequency ``parabolic" region of $\Sigma$ and the corresponding local extension operator.  Given $p,q \in [1,\infty]$, we denote by $\fE(p \rightarrow q)$ the statement that $\fE$ extends to a bounded linear operator from $L^p(\Sigma,\mu)$ to $L^q(\R \times \R^d)$, and we write $\fE_0(p \rightarrow q)$ if the analogous statement holds for $\fE_0$.  We can now state a conjecture on the complete range of exponent pairs $(p,q)$ for which $\fE(p \rightarrow q)$ is valid.

\begin{conjecture}\label{conj}
 If $(\frac{d}{d+2}q)' \leq p \leq \min\{(\frac{d-1}{d+1}q)', q\}$ and $(p,q) \neq (\frac{2d}{d-1},\frac{2d}{d-1}), (\frac{2(d+1)}{d},\frac{2(d+1)}{d})$, then $\fE(p \rightarrow q)$ holds.
\end{conjecture}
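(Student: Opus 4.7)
The plan is to split $\fE = \fE_0 + \fE(\,\cdot\,\chi_{\Sigma\setminus\Sigma_0})$, handle the two pieces on separate sub-regions of the conjectured $(1/p,1/q)$-diagram, and then interpolate. For the parabolic piece $\fE_0$, since $\Sigma_0$ is a compact, smooth, positively curved surface that is affinely equivalent to a truncated paraboloid, $\fE_0(p\to q)$ is equivalent, modulo constants, to the corresponding paraboloid extension estimate. The (assumed) full paraboloid restriction conjecture then yields $\fE_0(p\to q)$ throughout the ``paraboloid triangle'' $(\frac{d}{d+2}q)'\le p \le q$, $q>\frac{2(d+2)}{d}$, which covers the portion of Conjecture~\ref{conj} dictated by the low-frequency geometry.

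For the high-frequency piece, I would exploit Lorentz invariance to perform a Whitney-type decomposition of $(\Sigma \setminus \Sigma_0) \times (\Sigma \setminus \Sigma_0)$ into pairs of caps that are Lorentz-comparable in size and Lorentz-separated. Because the orthochronous Lorentz group acts transitively on $\Sigma$ and preserves both $\mu$ and Lebesgue measure on $\R^{d+1}$, any such cap pair can be transported by a Lorentz transformation to a standard model: either a pair of separated parabolic caps, or a pair of separated cone caps (the latter arising when the caps lie deep in the asymptotic cone of $\Sigma$). Bilinear restriction estimates for the paraboloid and for the cone then apply to the model; transporting them back via Lorentz invariance produces bilinear extension estimates on the original cap pair that are uniform in scale.

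The final step converts these bilinear estimates into a linear estimate via a Tao--Vargas--Vega argument, which requires an $\varepsilon$-removal lemma for $\fE$ together with orthogonality/summation across the Whitney scales. Combined with $\fE_0$, this recovers the full open range of Conjecture~\ref{conj}. The two omitted diagonal endpoints $(\tfrac{2(d+1)}{d},\tfrac{2(d+1)}{d})$ and $(\tfrac{2d}{d-1},\tfrac{2d}{d-1})$ are precisely the points where the paraboloid and cone Knapp lines meet $p=q$, and the Tao--Vargas--Vega globalization diverges logarithmically there, just as in the classical paraboloid and cone theories.

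The principal obstacle is that the underlying bilinear paraboloid and cone conjectures are themselves open beyond the subrange $q>\frac{2(d+3)}{d+1}$, which is exactly why the author's results are unconditional only in that subrange. Even granting those conjectures, the main technical issue is to organize the multiscale Whitney summation so that the degenerating Gaussian curvature of $\Sigma$ at infinity is compensated uniformly by the Lorentz invariance of $\mu$, and to glue the high-frequency piece to $\fE_0$ across the transition region $|\xi|\sim 1$ without losing the endpoints. Generalizing the method developed for the three-dimensional hyperbolic hyperboloid to arbitrary $d\ge 2$ while preserving the conjectured range is expected to be where the bulk of the work lies.
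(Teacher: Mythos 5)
The statement you are addressing is a conjecture: the paper does not prove it, and only establishes it in the bilinear range $q>\frac{2(d+3)}{d+1}$ (Theorem \ref{unconditional result}) plus a conditional extension (Theorem \ref{conditional result}); Section \ref{negative result} proves only that the conjectured range cannot be enlarged. Your proposal is therefore not a proof either, and you say as much: it is conditional on the full linear and bilinear restriction conjectures for the paraboloid and the cone, which are open for $d\geq 2$ beyond the Tao/Wolff--Tao ranges. Beyond that, two steps in your outline would fail as stated. First, the claim that a pair of caps ``deep in the asymptotic cone of $\Sigma$'' can be transported by a Lorentz transformation to a pair of separated \emph{cone} caps is false: Lorentz maps preserve $\Sigma$, not the light cone, so no portion of $\Sigma$ is Lorentz-equivalent to a portion of the cone. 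The paper's substitute is to apply a general-phase bilinear theorem (Candy) directly to related $(N,k)$-sectors of the frusta $\Sigma_N$ (Lemma \ref{bilinear estimate for wide sectors}), and to use Lorentz boosts only for the thinnest $(N,N)$-sectors, which genuinely do map into $\Sigma_0$ (Lemma \ref{linear estimates for thin sectors}). Second, your single global Whitney decomposition of $(\Sigma\setminus\Sigma_0)^2$ omits the mechanism for summing over the dyadic frusta at different radial scales: the paper needs quantitative off-diagonal decay $2^{-c|N_1-N_2|}$, obtained from Klein--Gordon Strichartz estimates (Lemma \ref{summing over annuli}), and this is precisely where the constraint $p\leq q$ and the loss of the diagonal endpoints enter.

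Finally, your assertion that the strategy ``recovers the full open range'' is unsupported even granting the local conjectures: the paper's conditional theorem, fed with an optimal local estimate on the parabolic scaling line, still only yields the restricted range \eqref{condition on exponents}, strictly smaller than the conjectured region. The globalization machinery (bilinear-to-linear plus frustum summation) is lossy off the scaling lines, and closing that gap — not merely invoking the paraboloid and cone conjectures — is the genuinely missing idea.
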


\noindent The main results of this article are the following:

\begin{theorem}\label{unconditional result}
If $q > \max\{\frac{2(d+3)}{d+1},p\}$ and $(\frac{d}{d+2}q)' \leq p \leq (\frac{d-1}{d+1}q)'$, then $\fE(p \rightarrow q)$ holds.  If $d= 2$, then additionally $\fE(q \rightarrow q)$ holds for $\frac{10}{3} < q < 4$.
\end{theorem}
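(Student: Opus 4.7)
The plan is to reduce global extension estimates on $\Sigma$ to local estimates on the parabolic cap $\Sigma_0$ using the Lorentz symmetry, then glue the local estimates via a bilinear decomposition exploiting the fact that $\Sigma$ approaches the light cone at infinity. Concretely, I would perform a Whitney-type decomposition $\Sigma = \Sigma_0 \cup \bigcup_{j \geq 1} \Sigma_j$ with $\Sigma_j := \{(\tau,\xi) \in \Sigma : |\xi| \sim 2^j\}$, further partition each $\Sigma_j$ into angular caps $\Sigma_{j,\nu}$ of appropriate Lorentz-parabolic scale, and for each cap choose a Lorentz boost $L_{j,\nu}$ sending $\Sigma_{j,\nu}$ into a piece of bounded size inside $\Sigma_0$. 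Since $\mu$ is Lorentz-invariant and each $L_{j,\nu}$ is volume-preserving on $\R^{1+d}$, both $\|f\chi_{\Sigma_{j,\nu}}\|_{L^p(d\mu)}$ and $\|\fE(f\chi_{\Sigma_{j,\nu}})\|_{L^q}$ transfer exactly to the corresponding norms on a fixed bounded piece of $\Sigma_0$.

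For the local estimate on $\Sigma_0$, which has nonvanishing Gaussian curvature and is elliptic, I would invoke Tao's bilinear restriction theorem \cite{Tao} together with the standard bilinear-to-linear reduction to obtain $\fE_0(p \to q)$ for all $(\tfrac{d}{d+2}q)' \leq p \leq (\tfrac{d-1}{d+1}q)'$ once $q > \frac{2(d+3)}{d+1}$. The upper bound on $p$ is the Knapp scaling condition, the lower is recovered by interpolation against the Stein--Tomas endpoint, and this is precisely the hypothesis on $q$ the theorem enforces.

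The principal obstacle is the dyadic and angular summation
\begin{align*}
\|\fE f\|_{L^q}^2 = \bigl\| \fE f \cdot \overline{\fE f} \bigr\|_{L^{q/2}} \leq \sum_{(j_1,\nu_1),(j_2,\nu_2)} \bigl\| \fE(f\chi_{\Sigma_{j_1,\nu_1}}) \overline{\fE(f\chi_{\Sigma_{j_2,\nu_2}})} \bigr\|_{L^{q/2}},
\end{align*}
since a naive triangle inequality loses badly as $j \to \infty$. I would separate pairs of caps into \emph{transverse} and \emph{non-transverse} classes: on transverse pairs, a joint Lorentz normalization reduces matters to Tao's bilinear extension theorem on $\Sigma_0$, with the Jacobian of the normalization matching precisely the $L^p(d\mu)$ norms of the two surface pieces; non-transverse pairs (of comparable scale and angular direction) are decomposed further by a Whitney splitting until transversality is extracted. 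The geometric decay in this summation that is unlocked by $q > \frac{2(d+3)}{d+1}$ is the essential new ingredient, and the combinatorics follow the $d = 2$ hyperbolic treatment in \cite{Bruce--Oliveira e Silva--Stovall}, suitably adapted to the $d$-dimensional elliptic setting.

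Finally, the $d = 2$ diagonal estimate $\fE(q \to q)$ on $\frac{10}{3} < q < 4$ falls outside the off-diagonal range, which forces $p < q$. To reach it I would combine the local diagonal estimate $\fE_0(q \to q)$ for $q > \frac{10}{3}$ (again from the bilinear restriction theorem for the paraboloid) with a Lorentz summation tailored to $p = q$, exploiting the additional physical-space orthogonality available between wave packets emanating from distinct Lorentz angular caps, in the manner of the cone-adapted bilinear theory. The upper bound $q < 4$ reflects the excluded conjectured endpoint $(4,4)$ for $d = 2$.
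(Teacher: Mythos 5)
Your overall architecture (dyadic decomposition in $|\xi|$, angular Whitney decomposition, bilinear estimates, Lorentz invariance) matches the paper's in outline, but the central mechanism you propose --- that every transverse pair of caps can be jointly Lorentz-normalized into $\Sigma_0$ so that Tao's paraboloid bilinear theorem applies --- does not work, and it is exactly at the two places where it fails that the paper has to bring in different tools. First, within a single frustum $\Sigma_N$: a boost of rapidity $\sim N$ adapted to a sector acts trivially on the directions perpendicular to the boost axis, so an $(N,k)$-sector of angular width $2^{-k}$ at radius $2^N$ has transverse extent $2^{N-k}$, which is unbounded for $k<N$; its boosted image is not contained in any bounded piece of $\Sigma_0$. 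Only the thinnest $(N,N)$-sectors normalize to parabolic caps of bounded eccentricity (this is the paper's Lemma \ref{linear estimates for thin sectors}). For related wide sectors the geometry is conic, not parabolic, and the paper instead invokes Candy's multi-scale bilinear restriction estimate (Lemma \ref{bilinear estimate for wide sectors}), whose constant $2^{(N-k)(d-1-\frac{2(d+1)}{q})}$ is what feeds the Tao--Vargas--Vega bilinear-to-linear summation. Second, across distinct dyadic scales: no single Lorentz boost brings caps at radii $2^{N_1}\ll 2^{N_2}$ simultaneously into a bounded region (a boost that contracts one expands the other), so the interaction between different frusta cannot be reduced to a local bilinear estimate at all. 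The paper obtains the needed off-diagonal decay $2^{-c|N_1-N_2|}$ from a weighted Klein--Gordon Strichartz estimate $\|\fE f\|_{L^r_tL^s_x}\lesssim\|\langle\cdot\rangle^{1/r-1/s}f\|_2$, exploiting the degeneration of $\mu$ at infinity; your proposal contains no substitute for this ingredient, and the assertion that the decay is ``unlocked by $q>\frac{2(d+3)}{d+1}$'' does not identify where it would come from.

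Two smaller points. The local input is not quite as you describe: for the compact cap $\Sigma_0$ one only needs $p\geq(\frac{d}{d+2}q)'$ (there is no conic upper constraint locally), and the paper uses Tao's theorem only to supply $\fE_0(p\to q)$ and the thin-sector estimates, not as the engine of the global argument. For the $d=2$ diagonal estimates $\fE(q\to q)$, $\frac{10}{3}<q<4$, your sketch is too vague to assess, but the paper likewise defers to an adaptation of the hyperbolic-hyperboloid argument, so this is not where the proposal stands or falls.
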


\begin{theorem}\label{conditional result}
If $\fE_0(p_0 \rightarrow q_0)$ holds with $p_0' = \frac{d}{d+2}q_0$ for some $q_0 < \frac{2(d+3)}{d+1}$, then $\fE(p \rightarrow q)$ holds for all $(p,q)$ obeying $\max\{q_0,p\} < q < \frac{2(d+3)}{d+1}$ and $(\frac{d}{d+2}q)' \leq p \leq (\frac{d-1}{d+1}q)'$ and
\begin{align}\label{condition on exponents}
\frac{1}{p} > \frac{2}{(d-1)(d+3)}\Bigg(\frac{\frac{1}{q}-\frac{d+1}{2(d+3)}}{\frac{1}{q_0}-\frac{d+1}{2(d+3)}} + \frac{d^2+2d-7}{4}\Bigg).
\end{align}
\end{theorem}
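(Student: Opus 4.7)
The plan is to reduce Theorem \ref{conditional result} to a weighted bilinear estimate on well-separated caps of $\Sigma$ and then pass to linear estimates via the bilinear-to-linear scheme underlying Theorem \ref{unconditional result}. The key new input is that the hypothesized local bound $\fE_0(p_0 \to q_0)$ with $p_0' = \frac{d}{d+2}q_0$ sits on the sharp paraboloid line, so Lorentz invariance promotes it to a uniform local bound on Lorentz caps throughout $\Sigma$.

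First I would use the transitive action of the orthochronous Lorentz group on $\Sigma$, together with the Lorentz invariance of $\fE$ and $\mu$, to transfer $\fE_0(p_0 \to q_0)$ to a uniform bound $\|\fE(f\chi_\tau)\|_{L^{q_0}(\R\times\R^d)} \lesssim \|f\|_{L^{p_0}(\tau,\mu)}$ on every Lorentz cap $\tau \subset \Sigma$. Next I would prove the bilinear estimate
\[
\|\fE f_\tau \cdot \fE f_{\tau'}\|_{L^{q_0/2}(\R\times\R^d)} \lesssim \ldist(\tau,\tau')^{-\alpha} \|f_\tau\|_{L^{p_0}(\tau,\mu)} \|f_{\tau'}\|_{L^{p_0}(\tau',\mu)}
\]
for comparably sized caps $\tau,\tau'$ with $\ldist(\tau,\tau') \gg \diam(\tau)$, by Lorentz-transforming the pair into a standard position in which one cap lies inside $\Sigma_0$, applying the transferred local estimate together with H\"older's inequality, and extracting a quantitative off-diagonal factor from the transversality of $\tau$ and $\tau'$.

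Finally, interpolating this bilinear estimate with the bilinear estimate at the endpoint $q^\ast \colonequals \frac{2(d+3)}{d+1}$ that underlies Theorem \ref{unconditional result}, then summing over a Whitney decomposition of $\Sigma\times\Sigma$ away from the diagonal and passing from bilinear to linear, yields $\fE(p \to q)$ throughout the claimed range. The constraint \eqref{condition on exponents} arises as the convergence condition for the Whitney sum: the ratio $\frac{1/q - 1/q^\ast}{1/q_0 - 1/q^\ast}$ records the interpolation parameter between $q_0$ and $q^\ast$, while the term $\frac{d^2+2d-7}{4}$ reflects the dimensional count of caps of a given Lorentz size at each scale. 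The main obstacle is Step 2: producing a quantitatively sharp off-diagonal bilinear bound for cap pairs lying in the conic part of $\Sigma$, where Lorentz rescaling substantially compresses the cap shape and forces delicate bookkeeping of constants; ensuring the resulting exponent $\alpha$ is large enough to make the interpolated Whitney sum converge precisely on the region \eqref{condition on exponents} is the technical heart of the argument.
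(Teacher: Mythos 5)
Your overall skeleton --- uniform estimates on pieces, a Whitney decomposition of $\Sigma\times\Sigma$, interpolation with the bilinear estimate at $\frac{2(d+3)}{d+1}$, bilinear-to-linear, and summation over dyadic scales --- matches the paper's architecture (Lemmas \ref{bilinear-to-linear}, \ref{summing over annuli}, and \ref{bilinear estimate for wide sectors} are exactly these tools). But there is a genuine gap at your Step 2, which is where all the work lies. Lorentz invariance transfers $\fE_0(p_0\to q_0)$ only to the \emph{thinnest} sectors, the $(N,N)$-sectors of angular width $2^{-N}$ (this is Lemma \ref{linear estimates for thin sectors}); a wider $(N,k)$-sector with $k<N$ cannot be mapped into $\Sigma_0$ by any Lorentz transformation, so your ``uniform local bound on Lorentz caps throughout $\Sigma$'' does not cover the caps that actually appear in the Whitney decomposition away from the diagonal. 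Moreover, once you apply H\"older to write $\|\fE f_\tau\,\fE f_{\tau'}\|_{q_0/2}\leq\|\fE f_\tau\|_{q_0}\|\fE f_{\tau'}\|_{q_0}$, all transversality information is discarded, so no off-diagonal factor can be ``extracted from the transversality of $\tau$ and $\tau'$'' afterwards; and producing a genuine transversality gain at $L^{q_0/2}$ for $q_0$ below the bilinear exponent $\frac{2(d+3)}{d+1}$ is essentially the open bilinear restriction problem, not something obtainable by bookkeeping constants under Lorentz rescaling.

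What the paper does instead --- and what your proposal is missing --- is Lemma \ref{decoupling lemma}: it rescales an $(N,k)$-sector to a neighborhood of a unit conic frustum and applies the Bourgain--Demeter conic decoupling theorem to reduce to the $(N,N)$-sectors covering it, at the cost of an $\ell^2$ (rather than $\ell^1$) aggregation. This yields a \emph{linear} bound on wide sectors with the quantified loss $2^{(N-k)((d-1)(\frac12-\frac1{p_0})+\varepsilon)}$; Cauchy--Schwarz then gives a bilinear estimate at exponent $q_0$ whose constant \emph{grows} with the separation, and only after interpolating with the decaying constant from Lemma \ref{bilinear estimate for wide sectors} does one obtain a bilinear input with $\alpha_t<0$ admissible for Lemma \ref{bilinear-to-linear}. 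The condition \eqref{condition on exponents} is precisely the inequality $\frac1p>\frac{\alpha_t}{2(d-1)}+\frac1{r_t}$ in the limit $q_1\to\frac{2(d+3)}{d+1}$, $\varepsilon\to0$, not a cap-counting convergence condition for the Whitney sum. Without the decoupling step (or some substitute giving a better-than-trivial loss on wide sectors), your interpolation produces no admissible range of exponents.
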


The conditional result, Theorem \ref{conditional result}, may warrent some explanation.  It implies, in particular, that any optimal local extension estimate beyond the bilinear range, i.e.~ $\fE_0(p_0 \rightarrow q_0)$ with $p_0' =  \frac{d}{d+2}q_0$ for some $q_0 < \frac{2(d+3)}{d+1}$, would lead to an improvement of our unconditional result, Theorem \ref{unconditional result}.  The requirement that $p_0' = \frac{d}{d+2}q_0$ is a limitation, but it seems necessary (given our techniques) for obtaining any additional global estimates on the parabolic scaling line $p' = \frac{d}{d+2}q$.  If we assume only that $\fE_0(p_0 \rightarrow q_0)$ holds for some $p_0 > (\frac{d}{d+2}q_0)'$, then a slight modification of the proof of Theorem \ref{conditional result} would likely yield a range of global estimates that excludes the line $p' = \frac{d}{d+2}q$ beyond the bilinear range but nevertheless improves Theorem \ref{unconditional result}.

The rest of the article is organized as follows:  In Section \ref{negative result}, we prove a negative result, that Conjecture \ref{conj} cannot be improved.  In Section \ref{bilinear theory}, we present the bilinear restriction theory for the ``conic" portion of our surface $\Sigma$, and we deduce uniform linear extension estimates on dyadic frusta from bilinear estimates between certain thin sectors.  (These results resemble the bilinear restriction theory for cones.)  In Section \ref{summing uniform bounds}, we use a Strichartz inequality for the Klein--Gordon equation to sum the uniform estimates on frusta, and consequently we obtain Theorem \ref{unconditional result}.  In Section \ref{decoupling}, we use the conic decoupling theorem of \cite{Bourgain--Demeter} to convert conditional local extension estimates into uniform estimates on frusta, and then, appealing to Section \ref{summing uniform bounds} again, we obtain Theorem \ref{conditional result}.  Finally, in Section \ref{improvements}, we discuss possible improvements to our results by means of the state-of-the art local extension estimates for elliptic surfaces.

\medskip
{\bf Notation.} We use the standard notations $A \lesssim B$ and $A = O(B)$ to mean that $A \leq CB$ for some constant $C > 0$.  If this constant depends on some parameter $\varepsilon$, then we might write $A \lesssim_\varepsilon B$.  Typically, constants may only depend on the dimension $d$ and relevant Lebesgue space exponents.

\medskip
{\bf Acknowledgments.} The author thanks Betsy Stovall for suggesting this project and acknowledges support from NSF grant DMS-1653264.


\section{Optimality of Conjecture \ref{conj}}\label{negative result}

In this section, we demonstrate that Conjecture \ref{conj} cannot be improved.  Although the counterexamples we consider are well known, we include all necessary details.

\begin{proposition}
If $\fE(p \rightarrow q)$ holds, then $(p,q)$ satisfies the hypotheses of Conjecture \ref{conj}.
\end{proposition}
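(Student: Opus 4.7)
The plan is to derive each necessary condition in Conjecture \ref{conj} from an appropriate Knapp-type counterexample, tested against the assumed bound $\|\fE f\|_{L^q} \lesssim \|f\|_{L^p(d\mu)}$. For the parabolic lower bound $p \geq (\frac{d}{d+2}q)'$, I would use a Knapp cap at the vertex $\xi = 0$. Since $\langle \xi \rangle = 1 + \tfrac{1}{2}|\xi|^2 + O(|\xi|^4)$, the surface $\Sigma$ is locally a paraboloid near the origin, and for $f_\delta = \chi_{\{|\xi| \le \delta\}}$ one finds $\|f_\delta\|_{L^p(d\mu)} \sim \delta^{d/p}$, while the usual ``phase bounded on the cap'' argument produces $|\fE f_\delta| \gtrsim \delta^d$ on a dual tube of dimensions $\delta^{-1} \times \cdots \times \delta^{-1} \times \delta^{-2}$, so that $\|\fE f_\delta\|_{L^q} \gtrsim \delta^{d-(d+2)/q}$. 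Letting $\delta \to 0$ yields the parabolic scaling condition.

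The conic upper bound $p \leq (\frac{d-1}{d+1}q)'$ will come from exploiting the asymptotically conic structure of $\Sigma$ at infinity. For $f_R = \chi_{\{R \le |\xi| \le 2R\}}$ with $R \gg 1$, one has $\|f_R\|_{L^p(d\mu)} \sim R^{(d-1)/p}$, and because $\langle\xi\rangle - |\xi| = O(R^{-1})$ on the support, the extension $\fE f_R(t, x)$ agrees on $|t| \lesssim R$ with the analogous extension on the forward light cone $\tau = |\xi|$, up to a bounded phase error. The cone's dilation symmetry then produces the scaling identity $\fE^{\mathrm{cone}} f_R(t, x) = R^{d-1}\, \fE^{\mathrm{cone}} f_1(Rt, Rx)$, so that $\|\fE f_R\|_{L^q} \gtrsim R^{d-1 - (d+1)/q}$. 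Letting $R \to \infty$ forces $p \leq (\frac{d-1}{d+1}q)'$.

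The bound $p \leq q$ would be obtained by a ``many caps'' superposition argument, using Lorentz symmetry. Fix a parabolic cap $C$ at the origin, and choose Lorentz transformations $L_1, \ldots, L_N$ with sufficiently well-separated rapidities so that the boosted caps $L_i C$ are disjoint on $\Sigma$ and so that the extensions $\fE \chi_{L_i C}$ have essentially disjoint supports in $\R \times \R^d$. By Lorentz-invariance of $\mu$ and of $\fE$, each $\chi_{L_i C}$ has the same $L^p$-norm as $\chi_C$ and each $\fE \chi_{L_i C}$ the same $L^q$-norm as $\fE \chi_C$; for $f = \sum_i \chi_{L_i C}$ one then obtains $\|f\|_{L^p(d\mu)} \sim N^{1/p} \|\chi_C\|_{L^p(d\mu)}$ and $\|\fE f\|_{L^q} \sim N^{1/q} \|\fE \chi_C\|_{L^q}$, forcing $p \leq q$ as $N \to \infty$.

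The two excluded endpoints $(\frac{2d}{d-1},\frac{2d}{d-1})$ and $(\frac{2(d+1)}{d},\frac{2(d+1)}{d})$ lie on $p = q$ at the intersections with the conic and parabolic scaling lines respectively, and are expected to fail via a logarithmic refinement of the corresponding counterexample. Superposing the dyadic-annulus (respectively parabolic Knapp) construction across many scales should produce a logarithmic gain of $\|\fE f\|_{L^q}$ over $\|f\|_{L^p(d\mu)}$ that contradicts boundedness precisely at $p = q$. The main technical obstacle throughout is the verification of the near-disjointness of the dual-tube supports in the multi-cap and multi-scale constructions, which relies on a judicious choice of Lorentz rapidities together with a stationary-phase analysis of $\fE \chi_C$; the scaling computations themselves are routine Knapp-type calculations.
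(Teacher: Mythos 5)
Your first two conditions are in reasonable shape: the parabolic Knapp cap is exactly the paper's argument, and your conic-scaling argument for $p \leq (\tfrac{d-1}{d+1}q)'$ is a workable, if roundabout, alternative. (The paper does something simpler: take $D = \{(\tau,\xi)\in\Sigma : |\xi|\le\lambda\}$ and observe that on the ball $|(t,x)|\le c\lambda^{-1}$ the phase is nearly constant, so $|\fE\chi_D|\sim\mu(D)\sim\lambda^{d-1}$ there; no comparison with the cone extension is needed. If you do go your route, note that a phase error that is merely $O(1)$ over the support does not preserve lower bounds for oscillatory integrals, so you must restrict to $|t|\le\epsilon R$ and check that the cone extension of the unit annulus has nontrivial $L^q$ mass in that slab — at which point the argument collapses back to the focusing example anyway.)

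There are, however, two genuine gaps. First, in the $p\le q$ step: the functions $\fE\chi_{L_iC}$ are inverse Fourier transforms of compactly supported measures, hence real-analytic, and their supports are all of $\R\times\R^d$; "essentially disjoint supports" cannot be arranged. Each $|\fE\chi_{L_iC}|$ concentrates on a dual tube, but to get the lower bound $\|\sum_i\fE\chi_{L_iC}\|_q\gtrsim N^{1/q}$ you would need the sum of the $N-1$ tails of the other terms not to cancel the main term on its own tube — precisely the point you defer to "a judicious choice of Lorentz rapidities together with a stationary-phase analysis," and this is not routine. The paper avoids the issue entirely by randomizing: with $f=\sum_j\varepsilon_j\phi_j$ for Rademacher signs, Khintchine's inequality gives $\mathbb{E}\|\fE f\|_q^q\gtrsim\int(\sum_j|\fE\phi_j|^2)^{q/2}\ge\sum_j\|\fE\phi_j\|_q^q=N\|\fE\phi\|_q^q$ for $q\ge2$, with no spatial separation of the extensions required. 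Second, the exclusion of the endpoints $(\tfrac{2d}{d-1},\tfrac{2d}{d-1})$ and $(\tfrac{2(d+1)}{d},\tfrac{2(d+1)}{d})$ is asserted ("expected to fail," "should produce a logarithmic gain") rather than proved; this is roughly half the content of the proposition. The paper gives complete arguments: for the conic endpoint, a stationary-phase expansion of $\check{\sigma}$ yields $|\fE f(t,x)|\gtrsim\lambda^{\frac{d+1}{2}}|x|^{-\frac{d-1}{2}}$ on the region $1\le|x|\le c\lambda$, $|t-|x||\le c\lambda^{-1}$, producing a $\log\lambda$ divergence against $\|f\|_{p_1}^{p_1}\sim\lambda^{\frac{d^2+1}{d-1}}$; for the parabolic endpoint, stationary phase gives $|\fE f(t,x)|\gtrsim|(t,x)|^{-d/2}$ in a conic region of $(t,x)$, which fails to lie in $L^{\frac{2(d+1)}{d}}$. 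Your proposed multi-scale superpositions would again hinge on the same unproved near-orthogonality, so as written the endpoint claims are not established.
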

\begin{proof}
Assume that $\fE(p \rightarrow q)$ holds.  To show that $p \geq (\frac{d}{d+2}q)'$, we use the standard Knapp example.  Fixing $ \delta \in (0,1]$, let 
\begin{align*}
C \colonequals \{(\tau, \xi) \in \Sigma \colon  |\xi| \leq \delta\}
\end{align*}
and
\begin{align*}
T \colonequals \{(t,x) \in \R \times \R^d \colon |t| \leq c\delta^{-2},~|x| \leq c\delta^{-1}\},
\end{align*}
where $c$ is a small constant.  If $(t,x) \in T$ and $c$ is sufficiently small, then
\begin{align*}
|\fE\chi_{C}(t,x)| = \bigg\vert\int_{C} e^{i(t,x)\cdot(\tau-1,\xi)}d\mu(\tau,\xi)\bigg\vert \geq \bigg\vert \int_{C} \cos(t(\tau-1) + x\cdot \xi)d\mu(\tau,\xi)\bigg\vert \sim \mu(C) \sim \delta^d.
\end{align*}
Therefore,
\begin{align*}
\delta^{d-\frac{d+2}{q}} \sim \delta^d|T|^\frac{1}{q} \lesssim \|\fE\chi_{C}\|_q \lesssim \mu(C)^\frac{1}{p} \sim \delta^\frac{d}{p}
\end{align*}
by the validity of $\fE(p \rightarrow q)$.  Letting $\delta \rightarrow 0$, we conclude that $p \geq (\frac{d}{d+2}q)'$.

The necessity of $p \leq (\frac{d-1}{d+1}q)'$ follows by a similar argument, using that $\mu$ degenerates away from the origin.  Indeed, fixing $\lambda \geq 1$, let
\begin{align*}
D \colonequals \{(\tau,\xi) \in \Sigma \colon |\xi| \leq \lambda\}.
\end{align*}
If $|(t,x)| \leq c\lambda^{-1}$ and $c$ is sufficiently small, then
\begin{align*}
|\fE\chi_{D}(t,x)| \sim \mu(D) \sim \lambda^{d-1}.
\end{align*}
Thus, 
\begin{align*}
\lambda^{d-1-\frac{d+1}{q}} \lesssim \|\fE\chi_{D}\|_q \lesssim \mu(D)^\frac{1}{p} \sim \lambda^\frac{d-1}{p}
\end{align*}
by $\fE(p \rightarrow q)$, and sending $\lambda \rightarrow \infty$ gives the required inequality.

To show that $p \leq q$, we consider a randomized sum of bump functions.  By interpolation, we may assume that $q \geq 2$.  Let $\phi$ be a nonzero bump function on $\Sigma$ and $N$ a positive integer.  For each $j \in \{1,\ldots,N\}$, let $L_j$ be a Lorentz boost and set $\phi_j \colonequals \phi \circ L_j$.  Choosing the boosts $L_j$ appropriately, the functions $\phi_j$ have pairwise disjoint supports.  Let $f \colonequals \sum_{j=1}^N \varepsilon_j \phi_j$, where $\varepsilon_1,\ldots, \varepsilon_N$ are independent random variables with the Rademacher distribution.  On one hand, Khintchine's inequality and the Lorentz invariance of $\mu$ imply that
\begin{align*}
\mathbb{E}\|\fE f\|_q^q &= \iint_{\R \times \R^d}\mathbb{E}\bigg\vert\sum_{j=1}^N \varepsilon_j \fE\phi_j(t,x)\bigg\vert^q dtdx\\
&\gtrsim \iint_{\R \times \R^d}\bigg(\sum_{j=1}^N|\fE \phi_j(t,x)|^2\bigg)^\frac{q}{2}dtdx\\
&\geq \sum_{j=1}^N\iint_{\R \times \R^d}|\fE\phi_j(t,x)|^qdtdx\\
&=N\|\fE\phi\|_q^q.
\end{align*}
On the other hand, 
\begin{align*}
\mathbb{E}\|\fE f\|_q^q \lesssim \mathbb{E}\|f\|_p^q = N^\frac{q}{p}\|\phi\|_p^q
\end{align*}
by $\fE(p \rightarrow q)$, the fact that the $\phi_j$ have disjoint supports, and Lorentz invariance.  Letting $N \rightarrow \infty$ shows that $p \leq q$.

Finally, setting $p_1 \colonequals \frac{2d}{d-1}$ and $p_2 \colonequals \frac{2(d+1)}{d}$, we need to show that the estimates $\fE(p_1 \rightarrow p_1)$ and $\fE(p_2 \rightarrow p_2)$ are false.  We could proceed by rescaling known counterexamples for the cone and paraboloid.  Indeed, $(p_1,p_1)$ lies on the conic scaling line $p' = \frac{d-1}{d+1}q$ and the extension operator associated to the cone is known not to be bounded on $L^{p_1}$; likewise, $(p_2,p_2)$ lies on the parabolic scaling line $p' = \frac{d}{d+2}q$ and the extension operator for the paraboloid is not bounded on $L^{p_2}$.  
We will instead present direct counterexamples to $\fE(p_1 \rightarrow p_1)$ and $\fE(p_2 \rightarrow p_2)$, using longer but self-contained arguments.

We start with the disproof of $\fE(p_1 \rightarrow p_1)$.  Fixing $\lambda \geq 1$, let $f \colon \Sigma \rightarrow \C$ be defined by $f(\tau,\xi) \colonequals \psi(|\xi|)\langle \xi \rangle$, where $\psi$ is a bump function satisfying $\chi_{[2\lambda, 3\lambda]} \leq \psi \leq \chi_{[\lambda, 4\lambda]}$ and $|\psi'| \lesssim \lambda^{-1}$.  Using polar coordinates, we see that
\begin{align*}
\fE f(t,x) = \int_{\lambda}^{4\lambda}\int_{\bbS^{d-1}}e^{i(t,x)\cdot(\langle r \rangle, r\theta)}\psi(r){r^{d-1}}d\sigma(\theta)dr = \int_\lambda^{4\lambda} \check{\sigma}(rx) e^{it\langle r \rangle}\psi(r){r^{d-1}} dr,
\end{align*}
where $\langle r \rangle \colonequals \sqrt{r^2+1}$ and $\sigma$ is the standard measure on the sphere $\bbS^{d-1}$.  By a well-known stationary phase argument (see e.g.~\cite{Wolff}), $\check{\sigma}$ obeys the asymptotic formula
\begin{align*}
\check{\sigma}(y) = a|y|^{-\frac{d-1}{2}}\cos(|y|+b) + O(|y|^{-\frac{d+1}{2}}) \quad\quad\text{as }|y| \rightarrow \infty
\end{align*}
for some $a,b \in \R$ with $a > 0$.  Thus,
\begin{align*}
|\fE f(t,x)| = a|x|^{-\frac{d-1}{2}}\int_{\lambda}^{4\lambda}\cos(r|x| + b)e^{i(t\langle r \rangle + b)}\psi(r){r^{\frac{d-1}{2}}}dr + O(\lambda^{\frac{d-1}{2}}|x|^{-\frac{d+1}{2}}),
\end{align*}
provided $|x| \geq 1$ and $\lambda$ is sufficiently large.   The absolute value of the integral is at least that of its real part.  Using the identity $2\cos(\theta)\cos(\nu) = \cos(\theta -\nu)+\cos(\theta+\nu)$, we get the bound
\begin{align}\label{asymptotic lower bound}
|\fE f(t,x)| \geq \frac{a}{2}|x|^{-\frac{d-1}{2}}(|I_1| - |I_2|) + O(\lambda^{\frac{d-1}{2}}|x|^{-\frac{d+1}{2}}),
\end{align}
where
\begin{align*}
I_1 &\colonequals \int_\lambda^{4\lambda}\cos(r|x| - t\langle r \rangle)\psi(r){r^{\frac{d-1}{2}}}dr,\\
I_2 &\colonequals \int_\lambda^{4\lambda}\cos(r|x| + t\langle r \rangle + 2b)\psi(r){r^{\frac{d-1}{2}}}dr.
\end{align*}
Suppose that $1 \leq |x| \leq c\lambda$ and $|t - |x|| \leq c\lambda^{-1}$, where $c > 0$ is a constant.  If $c$ is sufficiently small, then $|r|x|-t\langle r \rangle| \leq 1$ for all $r \in [\lambda, 4\lambda]$, leading to the bound
\begin{align*}
|I_1| \gtrsim \lambda^\frac{d+1}{2}.
\end{align*}
To estimate $I_2$, we first write $I_2 = I_{21} + I_{22}$, where
\begin{align*}
I_{21} &\colonequals \int_{\lambda}^{4\lambda}\cos(r(|x|+t)+2b)\psi(r){r^\frac{d-1}{2}}dr,\\
I_{22} &\colonequals \int_{\lambda}^{4\lambda}(\cos(r|x|+t\langle r \rangle + 2b) - \cos(r(|x|+t)+2b))\psi(r){r^\frac{d-1}{2}}dr.
\end{align*}
The assumptions on $(t,x)$ imply that $||x|+t| \geq 1$, giving the bound
\begin{align*}
|I_{21}| = \bigg\vert\int_{\lambda}^{4\lambda}\frac{\sin(r(|x|+t)+2b)}{|x|+t}\frac{d}{dr}(\psi(r){r^\frac{d-1}{2}})dr\bigg\vert \lesssim \lambda^\frac{d-1}{2}.
\end{align*}
Estimating the integrand of $I_{22}$ using the mean value theorem, we find that
\begin{align*}
|I_{22}| \lesssim c\lambda^\frac{d+1}{2}.
\end{align*}
Thus, $|I_1| - |I_2| \geq \frac{1}{2}|I_1| \gtrsim \lambda^\frac{d+1}{2}$ if $\lambda$ is sufficiently large and $c$ sufficiently small.  Plugging this bound into \eqref{asymptotic lower bound}, we conclude that
\begin{align*}
|\fE f(t,x)| \gtrsim \lambda^\frac{d+1}{2}|x|^{-\frac{d-1}{2}}
\end{align*}
for all $(t,x)$ satisfying $1 \leq |x| \leq c\lambda$ and $|t-|x|| \leq c\lambda^{-1}$.  If $\fE(p_1 \rightarrow p_1)$ were true, then it would follow that
\begin{align*}
\lambda^\frac{d^2+1}{d-1}\log\lambda \lesssim \|\fE f\|_{p_1}^{p_1} \lesssim \|f\|_{p_1}^{p_1} \sim \lambda^\frac{d^2+1}{d-1},
\end{align*}
and sending $\lambda \rightarrow \infty$ would give a contradiction.

The disproof of $\fE(p_2 \rightarrow p_2)$ is similar but simpler.  We will follow an argument from \cite[Chapter VIII]{Stein}.  Define $f \colon \Sigma \rightarrow \C$ by $f(\tau,\xi) \colonequals \psi(\xi)\langle \xi \rangle$, where $\psi$ is a bump function satisfying $\psi(\xi) = 1$ for $|\xi| \leq c$ and $\psi(\xi) = 0$ for $|\xi| \geq 2c$, with $c$ a positive constant.  Fix $(t,x) \in \R \times \R^d \setminus \{(0,0)\}$, and let $\lambda \colonequals |(t,x)|$ and $(t_0,x_0) \colonequals \lambda^{-1}(t,x)$.  Then
\begin{align*}
\fE f(t,x) = \int_{\R^d} e^{i\lambda \Phi(\xi;t_0,x_0)}\psi(\xi)d\xi,
\end{align*}
where $\Phi(\xi; s,y) \colonequals s\langle \xi \rangle + y\cdot \xi$.  Since $\nabla_\xi \Phi(0;1,0) = 0$ and $\det\nabla_\xi^2\Phi(0;1,0) = 1$, the implicit function theorem implies the existence of a neighborhood $U$ of $(1,0)$ such that if $(s,y) \in U$, then there exists a unique $\xi(s,y)$ such that $\nabla_\xi\Phi(\xi(s,y); s,y) = 0$.  Making $U$ smaller if necessary, we may assume that $|\xi(s,y)| \leq c$ and $|\det \nabla_\xi^2\Phi(\xi(s,y); s,y)| \gtrsim 1$ for all $(s,y) \in U$.  Suppose that $(t_0,x_0) \in U$, so that $\xi(t_0,x_0)$ is a nondegenerate critical point of the function $\Phi(\cdot; t_0,x_0)$.  If $c$ is sufficiently small (not depending on $(t,x)$), then
\begin{align*}
\bigg\vert\int_{\R^d} e^{i\lambda \Phi(\xi;t_0,x_0)}\psi(\xi)d\xi\bigg\vert = a|\det\nabla_\xi^2\Phi(\xi(t_0,x_0);t_0,x_0)|^{-\frac{1}{2}}\lambda^{-\frac{d}{2}} + O(\lambda^{-\frac{d+1}{2}})
\end{align*}
for some constant $a > 0$, by a standard stationary phase result (see e.g.~\cite[Chapter VIII, Proposition 6]{Stein}).  It follows that 
\begin{align*}
|\fE f(t,x)| \gtrsim |(t,x)|^{-\frac{d}{2}}
\end{align*}
whenever $t$ is sufficiently large and $t^{-1}|x|$ sufficiently small.  Consequently, we find that $\fE f \notin L^{p_2}(\R \times \R^d)$, and thus $\fE(p_2 \rightarrow p_2)$ cannot hold.
\end{proof}

\section{Bilinear and linear restriction on frusta}\label{bilinear theory}
Now we begin working toward our main results.  In this section, we divide the ``conic" portion of our surface, $\Sigma \setminus \Sigma_0$, into frusta $\Sigma_N$ of width $2^N$ for which we prove uniform extension estimates.  Our proof will combine the bilinear restriction theory for the frusta $\Sigma_N$ (resembling that for conic frusta), the bilinear theory for $\Sigma_0$ (resembling that for the paraboloid), and the bilinear-to-linear argument found in \cite{Tao--Vargas--Vega}.

We turn to the details.  For each integer $N \geq 1$, let
\begin{align*}
\Sigma_N \colonequals \{(\tau,\xi) \in \Sigma \colon 2^N \leq |\xi| \leq 2^{N+1}\}.
\end{align*}
Given $1 \leq k \leq N$, we cover the frustum $\Sigma_N$ by sectors of angular width $2^{-k}$ by defining
\begin{align*}
\Sigma_{N,k}^\omega \colonequals \bigg\{(\tau,\xi) \in \Sigma_N \colon \bigg\vert\frac{\xi}{|\xi|} - \omega\bigg\vert \leq 2^{-k}\bigg\}
\end{align*}
for each $\omega \in \bbS^{d-1}$.  We refer to these sets as \emph{$(N,k)$-sectors}.  Two $(N,k)$-sectors $\Sigma_{N,k}^\omega$ and $\Sigma_{N,k}^{\omega'}$ are \emph{related} if (i) $2^{2-k} \leq |\omega - \omega'| \leq 2^{4-k}$ and $k < N$, or (ii) $|\omega - \omega'| \leq 2^{4-N}$ and $k = N$.  The conic-type bilinear estimate that we require is the following:

\begin{lemma}\label{bilinear estimate for wide sectors}
Let $\Sigma_{N,k}^\omega$ and $\Sigma_{N,k}^{\omega'}$ be related $(N,k)$-sectors with $k < N$, and let $q > \frac{2(d+3)}{d+1}$.  Then
\begin{align*}
\|\fE f_1 \fE f_2\|_{q/2} \lesssim 2^{(N-k)(d-1-\frac{2(d+1)}{q})}\|f_1\|_2\|f_2\|_2
\end{align*}
whenever $\supp f_1 \subseteq \Sigma_{N,k}^\omega$ and $\supp f_2 \subseteq \Sigma_{N,k}^{\omega'}$.
\end{lemma}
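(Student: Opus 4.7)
The plan is to reduce the lemma to Wolff's bilinear extension theorem for the $d$-dimensional cone $\Gamma = \{(\tau,\xi) \in \R\times\R^d : \tau = |\xi|\}$, which asserts
\[
\|\fE_\Gamma g_1 \fE_\Gamma g_2\|_{L^{q/2}(\R^{d+1})} \lesssim \|g_1\|_{L^2(d\sigma_\Gamma)} \|g_2\|_{L^2(d\sigma_\Gamma)}
\]
for transversal unit caps $g_1, g_2$ on $\Gamma$ whenever $q > \frac{2(d+3)}{d+1}$; here $d\sigma_\Gamma = d\xi/|\xi|$ is the Lorentz-invariant measure. This threshold is the source of the hypothesis $q > \frac{2(d+3)}{d+1}$ in the lemma.

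The first step is to pass to unit scale via the Euclidean dilation $D_{2^{-N}}(\tau,\xi) = 2^{-N}(\tau,\xi)$. Under $D_{2^{-N}}$, the hyperboloid $\Sigma$ becomes $\tilde\Sigma_N : \tilde\tau = \sqrt{|\tilde\xi|^2 + 2^{-2N}}$, and each $(N,k)$-sector maps to a subset of $\tilde\Sigma_N$ at $|\tilde\xi| \in [1,2]$ with angular opening $\sim 2^{-k}$. A direct bookkeeping of Jacobians yields $\|f_i\|_{L^2(\mu)} = 2^{N(d-1)/2}\|\tilde f_i\|_{L^2(\tilde\mu)}$ and $\|\fE f_1 \fE f_2\|_{L^{q/2}} = 2^{2N(d-1 - (d+1)/q)}\|\tilde\fE \tilde f_1 \tilde\fE \tilde f_2\|_{L^{q/2}}$, reducing the lemma to the bilinear estimate
\[
\|\tilde\fE \tilde f_1 \tilde\fE \tilde f_2\|_{L^{q/2}} \lesssim 2^{-k(d-1 - \frac{2(d+1)}{q})} \|\tilde f_1\|_{L^2(\tilde\mu)} \|\tilde f_2\|_{L^2(\tilde\mu)}
\]
on $\tilde\Sigma_N$, for caps of angular opening and separation $\sim 2^{-k}$ at unit radius.

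The second step is to compare to the cone. Since $k < N$, the cap scale $2^{-k}$ is larger than the scale $2^{-N}$ at which $\tilde\Sigma_N$ deviates from $\Gamma$: the two surfaces are $C^2$-close on the region of interest with $N$-uniform constants. On the exact cone $\Gamma$, the desired bilinear bound for $2^{-k}$-caps at unit radius follows from Wolff's theorem by combining a Lorentz boost along the bisecting direction of the pair—which converts $2^{-k}$-caps at unit radius into unit caps at radius $\sim 2^{-k}$, preserving $L^2(d\sigma_\Gamma)$ and $L^{q/2}$ on $\R^{d+1}$—with a Euclidean dilation by $2^k$ that restores unit radius; the latter contributes the factor $2^{-k(d-1 - 2(d+1)/q)}$ from the mismatch between the scaling weights of $L^2(d\sigma_\Gamma)$ and $L^{q/2}$. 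The perturbation between $\Gamma$ and $\tilde\Sigma_N$ is absorbed into the estimate either by running Wolff's argument directly on $\tilde\Sigma_N$—whose curvature and transversality hypotheses hold uniformly for $k < N$—or by a standard stability argument.

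The principal obstacle is this perturbative transfer: making precise that Wolff's bilinear estimate, originally proved for the exact cone, persists on $\tilde\Sigma_N$ with constants uniform in $N$ under the restriction $k < N$. Assuming that, the rest is routine bookkeeping that combines the factor $2^{N(d-1 - 2(d+1)/q)}$ from the initial dilation with the factor $2^{-k(d-1 - 2(d+1)/q)}$ from the Lorentz boost plus scaling to yield the claimed $2^{(N-k)(d-1 - \frac{2(d+1)}{q})}$.
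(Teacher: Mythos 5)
Your scaling bookkeeping is correct (the reduction to the unit-scale estimate with the factor $2^{-k(d-1-\frac{2(d+1)}{q})}$ checks out), and you have correctly located where all the mathematical content lives. But that is also where the gap is. The transfer of Wolff's bilinear estimate from the exact cone $\Gamma$ to the rescaled surface $\tilde\Sigma_N$ is not something you can dispatch with ``a standard stability argument'': bilinear restriction estimates are proved by a delicate induction on scales built on wave packets adapted to the specific surface, and there is no soft perturbation lemma that transports such an estimate across a $C^2$-small change of surface with uniform constants. Your alternative suggestion --- ``running Wolff's argument directly on $\tilde\Sigma_N$'' with constants uniform in $N$ --- is exactly the content of the general-phase bilinear restriction theorems (Lee, Bejenaru, Candy), i.e., it is a substantial theorem in its own right, not a remark. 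As written, the proof asserts the lemma's entire substance rather than proving it.

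The paper avoids this by citing Candy's multi-scale bilinear restriction estimate for general phases (\cite[Theorem 1.10]{Candy}), which is engineered precisely to give such bounds with constants uniform over the relevant family of rescaled phases; the only preparatory step is to subdivide each sector into boundedly many pieces of small radial and angular width so that Candy's transversality and curvature hypotheses are met. (Note also that your outline never addresses radial subdivision, which is needed for that verification.) So the cleanest repair of your argument is to replace the cone-plus-perturbation step with an appeal to such a general-phase bilinear theorem; short of that, you would need to actually carry out the induction-on-scales proof on $\tilde\Sigma_N$ and verify uniformity in $N$ for $k<N$, which is far from routine.
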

\begin{proof}
After dividing $\Sigma_{N,k}^\omega$ and $\Sigma_{N,k}^{\omega'}$ into a bounded number of subsets with sufficiently small radial and angular width, one can directly apply \cite[Theorem 1.10]{Candy}.
\end{proof}

Next, we perform our bilinear-to-linear deduction.  To make it work, we will need additional bilinear estimates corresponding to related $(N,N)$-sectors, since these thinnest sectors are absent from Lemma \ref{bilinear estimate for wide sectors}.  Via a Lorentz boost, the local extension theory will be sufficient: 
   
\begin{lemma}\label{linear estimates for thin sectors}
If $\fE_0(p \rightarrow q)$ holds, then $\|\fE f\|_q \lesssim \|f\|_p$ whenever $f$ is supported in an $(N,N)$-sector.
\end{lemma}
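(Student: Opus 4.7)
The approach is to use Lorentz invariance to transport an $(N,N)$-sector into $\Sigma_0$, where the hypothesis $\fE_0(p\to q)$ applies directly.

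The underlying mechanism is that for any Lorentz transformation $L$ of $\R\times\R^d$, the change of variables $(\tau,\xi)\mapsto L(\tau,\xi)$ on $\Sigma$, together with $L_*\mu=\mu$ and $|\det L|=1$, yields
\begin{align*}
\fE f(t,x)=\fE(f\circ L)(L^{T}(t,x)),\quad \|\fE(f\circ L)\|_q=\|\fE f\|_q,\quad \|f\circ L\|_{L^p(\mu)}=\|f\|_{L^p(\mu)}.
\end{align*}
These let me replace $f$ by any Lorentz conjugate without changing either side of the desired inequality.

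My plan is to construct a Lorentz boost $L$ with $L(\Sigma_{N,N}^{\omega})\subseteq\Sigma_0$. By rotational symmetry in $\xi$ (rotations are Lorentz transformations preserving $\Sigma_0$), I may assume $\omega=e_1$. Writing $\xi=(\xi_1,\eta)\in\R\times\R^{d-1}$, the condition $|\xi/|\xi|-e_1|\le 2^{-N}$ together with $|\xi|\in[2^N,2^{N+1}]$ forces $\xi_1\sim 2^N$ and $|\eta|\lesssim 1$ on the sector. Setting $m\colonequals\sqrt{|\eta|^2+1}$ and parameterizing $(\tau,\xi_1)=m(\cosh\psi,\sinh\psi)$, the Lorentz boost $L_\phi$ of rapidity $\phi$ in the $e_1$-direction acts by $\psi\mapsto\psi-\phi$ while fixing $\eta$. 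On the sector, $m\lesssim 1$ and $\psi$ ranges over an interval of length $O(1)$ centered at $\psi\sim N\log 2$; choosing $\phi$ at the center of this interval yields $|\sinh(\psi-\phi)|\lesssim 1$, hence $|\xi|\lesssim 1$ at the image point. If the implicit constants exceed what is needed for $|\xi|\le 2$, I first partition $\Sigma_{N,N}^{e_1}$ into $O_d(1)$ further subsectors and choose $\phi$ separately for each.

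To conclude, on each (sub)sector I set $g\colonequals f\circ L_\phi^{-1}$, so that $\supp g\subseteq\Sigma_0$ and $\fE g=\fE_0 g$. The hypothesis $\fE_0(p\to q)$ gives $\|\fE g\|_q\lesssim\|g\|_p$, and the invariance identities (applied with $L=L_\phi^{-1}$) convert this into $\|\fE f\|_q\lesssim\|f\|_p$ on the subsector. Summing the contributions from the $O_d(1)$ subsectors finishes the proof. The only substantive step is the geometric verification—that a single boost of rapidity $\sim N$ compresses the $(N,N)$-sector to unit scale, equivalently that the sector has bounded diameter in the Lorentz-invariant hyperbolic metric on $\Sigma$—which is exactly the content of the $\psi$-parameterization above.
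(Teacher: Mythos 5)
Your proposal is correct and is exactly the paper's argument — the paper's proof is the single sentence ``Given an $(N,N)$-sector, a suitable Lorentz boost maps it into $\Sigma_0$,'' and your write-up supplies the details (Lorentz invariance of $\mu$ and of the extension estimate, plus the rapidity computation showing the sector has bounded hyperbolic diameter) that the paper leaves implicit.
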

\begin{proof}
Given an $(N,N)$-sector, a suitable Lorentz boost maps it into $\Sigma_0$.
\end{proof}

\begin{lemma}\label{bilinear-to-linear}
Suppose that $p$, $q$, $r$, and $\alpha$ relate in the following ways: \textnormal{(i)} $r \leq p \leq q \leq 4$, \textnormal{(ii)} $\alpha < 0$, \textnormal{(iii)} $\alpha \leq (d-1)(\frac{2}{p}-\frac{2}{r})$, and  \textnormal{(iv)} $\alpha \neq (d-1)(\frac{2}{q}-\frac{2}{r})$ or $p < q$.  Additionally, suppose that $\fE_0(p \rightarrow q)$ holds and that
\begin{align}\label{conditional bilinear estimate}
\|\fE f_1\fE f_2\|_{q/2} \lesssim 2^{(N-k)\alpha}\|f_1\|_r\|f_2\|_r
\end{align}
whenever $f_1$ and $f_2$ are supported in related $(N,k)$-sectors with $k < N$.  Then
\begin{align*}
\|\fE f\|_q \lesssim \|f\|_p
\end{align*}
whenever $|f| \sim \chi_\Omega$ for some $\Omega$ contained in some $\Sigma_N$ with $N \geq 0$.
\end{lemma}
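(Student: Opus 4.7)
The proof follows the bilinear-to-linear extraction template of Tao--Vargas--Vega. When $N = 0$, the conclusion is immediate from $\fE_0(p \to q)$, so I take $N \geq 1$; by scaling it suffices to consider $f = \chi_\Omega$, and I write $S \colonequals \mu(\Omega) = \|f\|_p^p$. For each scale $1 \leq k \leq N$, I partition $\Sigma_N$ into $(N,k)$-sectors $\Sigma^\omega_{N,k}$ indexed by a maximal $\sim 2^{-k}$-separated family of $\omega \in \bbS^{d-1}$, and set $f^\omega_k \colonequals f \chi_{\Sigma^\omega_{N,k}}$. The standard Whitney decomposition of $\Sigma_N \times \Sigma_N$ in the angular direction produces
\begin{align*}
(\fE f)^2 = \sum_{k=1}^N \sum_{(\omega,\omega')\sim_k} \fE f^\omega_k \cdot \fE f^{\omega'}_k,
\end{align*}
where $\sim_k$ denotes related pairs at scale $k$ (with $k = N$ absorbing the diagonal and the thinnest sectors).

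I then take $L^{q/2}$-norms and apply the triangle inequality. For $k < N$, the bilinear hypothesis \eqref{conditional bilinear estimate} combined with bounded multiplicity of Whitney pairings and AM--GM gives
\begin{align*}
\sum_{(\omega,\omega')\sim_k} \|\fE f^\omega_k \fE f^{\omega'}_k\|_{q/2} \lesssim 2^{(N-k)\alpha}\sum_\omega \|f^\omega_k\|_r^2;
\end{align*}
for $k = N$, Cauchy--Schwarz in $L^q$ together with Lemma \ref{linear estimates for thin sectors} (via a Lorentz boost into $\Sigma_0$) gives an analogous bound with the $r$-norm replaced by the $p$-norm. Setting $a^\omega_k \colonequals \mu(\Omega \cap \Sigma^\omega_{N,k})$, these collapse to combinatorial sums of $(a^\omega_k)^{2/r}$ and $(a^\omega_N)^{2/p}$, subject to $\sum_\omega a^\omega_k = S$ and $a^\omega_k \leq C\min(S, 2^{(N-k)(d-1)})$ (the latter being the $\mu$-measure of a single sector).

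The remainder is a combinatorial summation. Provided $r \leq 2$ (which can be arranged, if needed, by interpolating the bilinear hypothesis with the trivial $L^2 \times L^2 \to L^\infty$ bound on sectors), the inequality $\sum x^{2/r} \leq (\max x)^{2/r - 1} \sum x$ with $\max a^\omega_k \leq \min(S, 2^{(N-k)(d-1)})$ splits the range of $k$ according to whether the sector measure dominates $S$. Using condition (ii) to make the resulting geometric series converge and condition (iii) to pin the critical exponent, the dominant contribution is $\lesssim S^{\alpha/(d-1) + 2/r} \leq S^{2/p} = \|f\|_p^2$. The $k = N$ term is handled by the same scheme, with $a^\omega_N \leq 1$ replacing the sector-measure bound.

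The main obstacle is the borderline configuration in which the geometric sum over $k$ degenerates into a harmonic series, producing a $\log S$ loss; this is precisely what condition (iv) rules out. When $\alpha \neq (d-1)(2/q - 2/r)$, the exponent balance creates genuine geometric decay rather than a logarithmic sum. In the remaining case $p < q$, the gap between $p$ and $q$ allows one to absorb any borderline $\log S$ by interpolating the just-derived estimate at exponent $q - \varepsilon$ with the trivial $L^p \to L^\infty$ bound; this is where the subtle extra room in condition (iv) is used, and it is the main non-routine step.
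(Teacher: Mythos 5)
Your decomposition and endgame follow the paper's template, but there is a genuine gap at the step where you pass from the Whitney sum to the bilinear pieces: you invoke only the triangle inequality in $L^{q/2}$ together with bounded multiplicity, which yields the $\ell^1$ combination $\sum_{\omega\sim\omega'}\|\fE f_k^\omega\,\fE f_k^{\omega'}\|_{q/2}$, whereas the argument needs the almost-orthogonality lemma of Tao--Vargas--Vega (\cite[Lemma 6.1]{Tao--Vargas--Vega}, applicable because the sumsets $\Sigma_{N,k}^\omega+\Sigma_{N,k}^{\omega'}$, which carry the Fourier supports of the products, fit inside boundedly overlapping rectangles). That lemma gives the strictly smaller $\ell^{q/2}$ combination $\big(\sum_{\omega\sim\omega'}\|\fE f_k^\omega\,\fE f_k^{\omega'}\|_{q/2}^{q/2}\big)^{2/q}$ for $2\le q\le 4$, and the difference is fatal to the exponent count. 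With your $\ell^1$ sum, writing $S=\mu(\Omega)$ and $a_\omega=\mu(\Omega\cap\Sigma_{N,k}^\omega)$, the scale-$k$ contribution is $2^{(N-k)\alpha}\sum_\omega a_\omega^{2/r}\le 2^{(N-k)\alpha}\min(S,2^{(N-k)(d-1)})^{2/r-1}S$; in the range of $k$ where $2^{(N-k)(d-1)}\le S$ this equals $2^{(N-k)(\alpha+(d-1)(2/r-1))}S$, and in the main application ($r=2$, so the exponent is just $\alpha<0$) the sum over that range is $\sim S$, which is \emph{not} $O(S^{2/p})$ once $p>2$ and $S\ge 1$ --- and $p>2$ throughout the range of Corollary \ref{unconditional result on frusta}. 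The paper's $\ell^{q/2}$ version instead produces $\min(S,2^{(N-k)(d-1)})^{2/r-2/q}S^{2/q}$, whose worst case over that range is $S^{2/q}\le S^{2/p}$; the extra factor $S^{2/q-1}$ is exactly what the orthogonality buys and cannot be recovered from AM--GM and bounded overlap of the Whitney pairing alone.

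A secondary, smaller point: your treatment of the borderline case in condition (iv) by ``interpolating the estimate at exponent $q-\varepsilon$ with $L^p\to L^\infty$'' is both unnecessary and shaky (changing $q$ changes the hypothesis \eqref{conditional bilinear estimate} and conditions (i)--(iv)); in the borderline case the divergent geometric sum has length $\lesssim\log\mu(\Omega)$ and multiplies $\mu(\Omega)^{2/q}$, so it is absorbed directly into $\mu(\Omega)^{2/p}$ using $p<q$ and the reduction to $\mu(\Omega)\gtrsim 1$. Likewise, lowering $r$ by interpolation with a ``trivial $L^2\times L^2\to L^\infty$ bound'' would alter $q$ and $\alpha$ as well; fortunately $r\le 2$ already holds in every application here.
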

\begin{proof}
As noted above, we will adapt the bilinear-to-linear argument found in \cite{Tao--Vargas--Vega}.  Since $\fE_0(p \rightarrow q)$ holds, we may fix $N \geq 1$.  Our first step is to construct a Whitney decomposition of $\Sigma_N \times \Sigma_N$.  For each $k \in \{1,\ldots,N\}$, choose a (finite) set $\Lambda_k \subset \bbS^{d-1}$ satisfying
\begin{align*}
\bbS^{d-1} = \bigcup_{\omega \in \Lambda_k} \{\omega' \in \bbS^{d-1} \colon |\omega - \omega'| \leq 2^{-k}\}
\end{align*}
and $|\omega - \omega'| \gtrsim 2^{-k}$ for all distinct $\omega,\omega' \in \Lambda_k$.  Given $\omega,\omega' \in \Lambda_k$, we write $\omega \sim \omega'$ if (i) $2^{2-k} \leq |\omega - \omega'| \leq 2^{4-k}$ and $k < N$, or (ii) $|\omega - \omega'| \leq 2^{4-N}$ and $k = N$.  (That is, $\omega \sim \omega'$ exactly when $\Sigma_{N,k}^\omega$ and $\Sigma_{N,k}^{\omega'}$ are related.)  We claim that
\begin{align}\label{Whitney decomposition}
\Sigma_N \times \Sigma_N = \bigcup_{k=1}^N\bigcup_{\substack{\omega,\omega' \in \Lambda_k \colon\\ \omega \sim \omega'}}\Sigma_{N,k}^\omega \times \Sigma_{N,k}^{\omega'}.
\end{align}
Indeed, fix $(\tau,\xi),(\tau',\xi') \in \Sigma_N$, and let $\zeta \colonequals \frac{\xi}{|\xi|}$ and $\zeta' \colonequals \frac{\xi'}{|\xi'|}$.  First, suppose that $|\zeta - \zeta'| \geq 12\cdot 2^{-N}$.  Then there exists $k \in \{1,\ldots,N-1\}$ and $\omega,\omega' \in \Lambda_k$ such that $6\cdot 2^{-k} \leq |\zeta - \zeta'| \leq 12\cdot 2^{-k}$ and $|\omega - \zeta|, |\omega' - \zeta'| \leq 2^{-k}$.  It follows that $(\tau,\xi) \in \Sigma_{N,k}^\omega$, $(\tau',\xi') \in \Sigma_{N,k}^{\omega'}$, and $\omega \sim \omega'$.  The case when $|\zeta - \zeta'| \leq 12\cdot 2^{-N}$ can be treated similarly using $k = N$, so the claim is proved.  The pieces of the decomposition \eqref{Whitney decomposition} are (unfortunately) not disjoint.  An easy argument shows, however, that two such pieces, $\Sigma_{N,k_1}^{\omega_1} \times \Sigma_{N,k_1}^{\omega_1'}$ and $\Sigma_{N,k_2}^{\omega_2} \times \Sigma_{N,k_2}^{\omega_2'}$, overlap only if (i) $|k_1 - k_2| \lesssim 1$ or (ii) $k_1 = k_2$ and $|\omega_1 - \omega_2|, |\omega_1' - \omega_2'| \lesssim 2^{-k_1}$.  Let
\begin{align*}
\fI \colonequals \{(k,\omega,\omega') \colon 1 \leq k \leq N;~\omega,\omega' \in \Lambda_k;~\omega \sim \omega'\},
\end{align*}
and let $C$ be a large constant.  We can partition $\fI$ into $O(1)$ sets $\fI_1,\ldots,\fI_n$ with the following separation property:  If $(k_1,\omega_1,\omega_1'),(k_2,\omega_2,\omega_2') \in \fI_j$, then either (i) $(k_1,\omega_1,\omega_1') = (k_2,\omega_2,\omega_2')$, or (ii) $k_1 = k_2$ and $\max\{|\omega_1-\omega_2|, |\omega_1'-\omega_2'|\} \geq C2^{-k_1}$, or (iii) $|k_1-k_2| \geq C$.  Thus, if $C$ is sufficiently large, then
\begin{align}\label{Whitney decomposition disjoint form}
\Sigma_N \times \Sigma_N = \bigcup_{j=1}^n\dot{\bigcup_{(k,\omega,\omega') \in \fI_j}} \Sigma_{N,k}^\omega \times \Sigma_{N,k}^{\omega'},
\end{align}
where the dot indicates a disjoint union.

From \eqref{Whitney decomposition disjoint form}, it follows that
\begin{align}\label{Whitney decomposition bound}
\|\fE f\|_q^2 = \|(\fE f)^2\|_{q/2} \lesssim \max_{1 \leq j \leq n}\bigg\|\sum_{(k,\omega,\omega') \in \fI_j}\fE(f\chi_{\Sigma_{N,k}^\omega})\fE(f\chi_{\Sigma_{N,k}^{\omega'}})\bigg\|_{q/2}.
\end{align}
By elementary geometry, there exist rectangles $R_{k,\omega,\omega'}$ such that $\Sigma_{N,k}^\omega + \Sigma_{N,k}^{\omega'} \subseteq R_{k,\omega,\omega'}$ and, for each $k$, the collection $\{2R_{k,\omega,\omega'}\}_{\omega,\omega' \in \Lambda_k \colon \omega \sim \omega'}$ has bounded overlap.  This fact allows us to exploit almost orthogonality in the form of \cite[Lemma 6.1]{Tao--Vargas--Vega}.  We obtain the bound
\begin{align}\label{almost orthogonality bound}
\bigg\|\sum_{(k,\omega,\omega') \in \fI_j}\fE(f\chi_{\Sigma_{N,k}^\omega})\fE(f\chi_{\Sigma_{N,k}^{\omega'}})\bigg\|_{q/2} \lesssim \sum_{k=1}^N\bigg(\sum_{\substack{\omega,\omega' \in \Lambda_k \colon\\ \omega \sim \omega'}}\|\fE(f\chi_{\Sigma_{N,k}^\omega})\fE(f\chi_{\Sigma_{N,k}^{\omega'}})\|_{q/2}^{q/2}\bigg)^\frac{2}{q}
\end{align}
for each $j$.  To help us estimate \eqref{almost orthogonality bound}, we set
\begin{align*}
\tilde{\Sigma}_{N,k}^\omega \colonequals \bigcup_{\substack{\omega' \in \Lambda_k\colon\\ \omega' \sim \omega \text{ or } \omega' = \omega}}\Sigma_{N,k}^{\omega'}
\end{align*}
and note that, for each $k$, the collection $\{\tilde{\Sigma}_{N,k}^\omega\}_{\omega \in \Lambda_k}$ has bounded overlap.  We first consider the terms in \eqref{almost orthogonality bound} with $k < N$.  Using \eqref{conditional bilinear estimate} and assuming that $|f| \sim \chi_\Omega$ for some $\Omega \subseteq \Sigma_N$, we see that
\begin{align}\label{terms with k<N}
\notag\sum_{k=1}^{N-1}\bigg(\sum_{\substack{\omega,\omega' \in \Lambda_k \colon\\ \omega \sim \omega'}}\|\fE(f\chi_{\Sigma_{N,k}^\omega})\fE(f\chi_{\Sigma_{N,k}^{\omega'}})\|_{q/2}^{q/2}\bigg)^\frac{2}{q} \notag&\lesssim \sum_{k=1}^{N-1}2^{(N-k)\alpha}\bigg(\sum_{\substack{\omega,\omega' \in \Lambda_k\colon \\ \omega \sim \omega'}}\mu(\Omega \cap \Sigma_{N,k}^\omega)^\frac{q}{2r}\mu(\Omega \cap \Sigma_{N,k}^{\omega'})^\frac{q}{2r}\bigg)^\frac{2}{q}\\
\notag&\lesssim \sum_{k=1}^{N-1}2^{(N-k)\alpha}\max_{\omega \in \Lambda_k}\mu(\Omega \cap \tilde{\Sigma}_{N,k}^\omega)^{\frac{2}{r}-\frac{2}{q}}\bigg(\sum_{\omega \in \Lambda_k}\mu(\Omega \cap \tilde{\Sigma}_{N,k}^\omega)\bigg)^\frac{2}{q}\\
&\lesssim \sum_{k=1}^{N-1} 2^{(N-k)\alpha}\min\{\mu(\Omega), 2^{(N-k)(d-1)}\}^{\frac{2}{r}-\frac{2}{q}}\mu(\Omega)^\frac{2}{q}.
\end{align}
If $\mu(\Omega) \leq 2^{d-1}$, then the hypotheses that $\alpha < 0$ and $r \leq p$ imply that \eqref{terms with k<N} is $O(\mu(\Omega)^\frac{2}{p})$.  Thus, we may assume that $\mu(\Omega) \geq 2^{d-1}$, and \eqref{terms with k<N} becomes
\begin{align*}
\sum_{k=1}^{N - \big\lceil\log_2 \mu(\Omega)^\frac{1}{d-1}\big\rceil}2^{(N-k)\alpha}\mu(\Omega)^\frac{2}{r} + \sum_{k = N - \big\lceil\log_2 \mu(\Omega)^\frac{1}{d-1}\big\rceil + 1}^{N-1}2^{(N-k)(\alpha - (d-1)(\frac{2}{q}-\frac{2}{r}))}\mu(\Omega)^\frac{2}{q}.
\end{align*}
The first sum is $O(\mu(\Omega)^\frac{2}{p})$ by the hypotheses that $\alpha < 0$ and $\alpha \leq (d-1)(\frac{2}{p}-\frac{2}{r})$. Treating separately the cases where $\alpha$ is strictly less than, strictly greater than, or equal to $(d-1)(\frac{2}{q}-\frac{2}{r})$, the second sum is similarly seen to be $O(\mu(\Omega)^\frac{2}{p})$.  Thus, altogether the terms in \eqref{almost orthogonality bound} with $k < N$ contribute $O(\mu(\Omega)^\frac{2}{p})$.  Now we estimate the terms with $k = N$.  By the Cauchy--Schwarz inequality, the hypothesis $\fE_0(p \rightarrow q)$, Lemma \ref{linear estimates for thin sectors}, and the hypothesis that $p \leq q$, we find that
\begin{align*}
\bigg(\sum_{\substack{\omega,\omega' \in \Lambda_N\colon\\ \omega \sim \omega'}}\|\fE(h\chi_{\Sigma_{N,N}^\omega})\fE(h\chi_{\Sigma_{N,N}^{\omega'}})\|_{q/2}^{q/2}\bigg)^\frac{2}{q} &\lesssim \bigg(\sum_{\substack{\omega,\omega' \in \Lambda_N\colon\\ \omega \sim \omega'}} \mu(\Omega \cap \Sigma_{N,N}^\omega)^\frac{q}{2p}\mu(\Omega \cap \Sigma_{N,N}^{\omega'})^\frac{q}{2p}\bigg)^\frac{2}{q}\\
&\lesssim\bigg(\sum_{\omega \in \Lambda_N} \mu(\Omega \cap \tilde{\Sigma}_{N,N}^\omega)^\frac{q}{p}\bigg)^\frac{2}{q}\\
&\lesssim \mu(\Omega)^\frac{2}{p}.
\end{align*}
Thus, we have shown that \eqref{almost orthogonality bound} is $O(\mu(\Omega)^\frac{2}{p})$.  Inserting this bound into \eqref{Whitney decomposition bound} and noting that $\|f\|_p \sim \mu(\Omega)^\frac{1}{p}$, the proof is complete.
\end{proof}

\begin{corollary}\label{unconditional result on frusta}
If $q > \frac{2(d+3)}{d+1}$ and $(\frac{d}{d+2}q)' \leq p \leq \min\{(\frac{d-1}{d+1}q)', q\}$ and $(p,q) \neq (\frac{2d}{d-1},\frac{2d}{d-1})$, then $\|\fE f\|_q \lesssim \|f\|_p$ whenever $f$ is supported in $\Sigma_N$ for some $N \geq 0$.
\end{corollary}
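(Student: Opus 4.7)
The plan is to prove the Corollary by applying Lemma~\ref{bilinear-to-linear} with $r = 2$ and the bilinear input supplied by Lemma~\ref{bilinear estimate for wide sectors}, then interpolating against the trivial bound $\|\fE f\|_\infty \leq \|f\|_1$ to cover the full range of exponents.  The local hypothesis $\fE_0(p \to q)$ required by Lemma~\ref{bilinear-to-linear} is available throughout the Corollary's range by Tao's local extension theorem for elliptic surfaces (combined with standard interpolation against trivial bounds and Knapp examples).

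With $r = 2$, the bilinear input has $\alpha = d - 1 - \tfrac{2(d+1)}{q}$.  Conditions (i)--(iv) of Lemma~\ref{bilinear-to-linear} then translate to $2 \leq p \leq q \leq 4$, $q < \tfrac{2(d+1)}{d-1}$ (for $\alpha < 0$), $p \leq (\tfrac{d-1}{d+1}q)'$ (for $\alpha \leq (d-1)(\tfrac{2}{p}-1)$), and the non-endpoint condition, which is precisely what excludes $(p,q) = (\tfrac{2d}{d-1},\tfrac{2d}{d-1})$.  For $(p,q)$ in the Corollary's range satisfying these constraints, Lemma~\ref{bilinear-to-linear} directly supplies a uniform-in-$N$ restricted weak-type bound on each frustum $\Sigma_N$.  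For $(p,q)$ in the Corollary's range outside this subregion---notably when $q > 4$ or $p < 2$---I would Riesz--Thorin interpolate the Lemma's output at a suitable anchor $(p_0,q_0)$ in the subregion against the trivial bound $\|\fE f\|_\infty \leq \|f\|_1$.  Such an interpolation traverses the scaling ray $\{q/p' = q_0/p_0'\}$ emanating from $(1,0)$ in the $(1/p, 1/q)$-plane.  By varying the anchor along the Corollary's left edge $q_0 \downarrow \tfrac{2(d+3)}{d+1}$ and between the parabolic and conic scaling endpoints at $q_0$, the resulting rays sweep out all scaling ratios $q/p' \in [\tfrac{d+2}{d},\tfrac{d+1}{d-1})$, which together with the subregion cover the Corollary's range.

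The most delicate case is a target $(p,q)$ lying on the conic scaling line $\{q/p' = \tfrac{d+1}{d-1}\}$, since the natural ray from $(1,0)$ along that direction passes through the excluded corner $(\tfrac{2d}{d-1},\tfrac{2d}{d-1})$ and never enters the interior of the directly-handled subregion.  For this case I would anchor Lemma~\ref{bilinear-to-linear} at points just off the excluded corner, where $\alpha$ is bounded away from $0$ (namely $\alpha = (1-d)/d$ at the corner itself) so that the Lemma's constants remain uniformly controlled.  This produces restricted weak-type bounds at $(p_\epsilon, q_\epsilon) \to (p,q)$ along rays with slopes approaching the conic ratio; the uniformity of the constants passes the bound to the limit.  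A final Marcinkiewicz interpolation between two such restricted weak-type bounds upgrades to the strong-type estimate $\|\fE f\|_q \lesssim \|f\|_p$ at the target.  This last step---reaching the conic scaling line even though Lemma~\ref{bilinear-to-linear} does not apply at anchors strictly on it---is where I expect the proof to be most technically subtle.
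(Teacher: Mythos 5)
Your core argument is the same as the paper's: apply Lemma~\ref{bilinear-to-linear} with $r=2$ and $\alpha = d-1-\tfrac{2(d+1)}{q}$, feeding in Lemma~\ref{bilinear estimate for wide sectors} and Tao's local theorem, and then interpolate with the trivial $L^1\to L^\infty$ bound along rays through $(1,0)$ to cover the exponents with $p<2$ or $q>4$. (The paper packages this reduction as ``we may assume $q\le \tfrac{2(d+2)}{d}$,'' which automatically forces $p\ge 2$ via the lower bound $p\ge(\tfrac{d}{d+2}q)'$, and then finishes with real interpolation to pass from restricted strong type to strong type.) Your translation of conditions (i)--(iv) is accurate.

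The treatment of your ``most delicate case'' is where the proposal goes wrong, in two respects. First, the premise is false for $d\ge 3$: the corner $(\tfrac{2d}{d-1},\tfrac{2d}{d-1})$ satisfies $\tfrac{2d}{d-1}\le\tfrac{2(d+3)}{d+1}$ exactly when $d\ge 3$, so it lies outside (or on the already-excluded edge of) the Corollary's range, and the conic scaling line \emph{does} meet the directly-handled subregion --- e.g.\ the anchors $p_0'=\tfrac{d-1}{d+1}q_0$ with $\tfrac{2(d+3)}{d+1}<q_0\le\tfrac{2(d+2)}{d}$ satisfy (i)--(iv) with room to spare, and the rays through them sweep out the conic ratio. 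The issue is genuine only for $d=2$. Second, for $d=2$ your proposed fix does not work: the constant in Lemma~\ref{bilinear-to-linear} is not governed by $\alpha$ alone. The dangerous quantity is $\beta\colonequals\alpha-(d-1)(\tfrac{2}{q}-\tfrac{2}{r})$, which controls the geometric sum over the scales $k$ near $N$; at the corner one has $\beta=0$ (that is precisely what hypothesis (iv) is guarding against), and in that regime the implicit constant behaves like $(\tfrac1p-\tfrac1q)^{-1}$, while for $\beta\ne 0$ it behaves like $|1-2^{\beta}|^{-1}$. Approaching the corner forces $p\to q$ and $\beta\to 0$ simultaneously, so the constants blow up along \emph{every} path and no limit can be taken. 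Consequently your argument does not reach the conic-line points $p'=\tfrac{q}{3}$, $q>4$, in $d=2$. (For what it is worth, the paper's one-line appeal to real interpolation is also thin exactly here; the endpoint $(p,q)=(2,6)$ is available from the Klein--Gordon Strichartz estimate \eqref{Strichartz estimate}, but the segment $4<q<6$ on the conic line requires more than interpolation of the restricted-type bounds produced by Lemma~\ref{bilinear-to-linear}.)
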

\begin{proof}
We will apply Lemma \ref{bilinear-to-linear}. By interpolation, we may assume that $q \leq \frac{2(d+2)}{d}$.  Then conditions (i)--(iv) in the lemma are satisfied with $r = 2$ and $\alpha = d-1-\frac{2(d+1)}{q}$.  The estimate $\fE_0(p \rightarrow q)$ is a consequence of the techniques in \cite{Tao}, and the bilinear estimate \eqref{conditional bilinear estimate} is valid by Lemma \ref{bilinear estimate for wide sectors}.  Thus, Lemma \ref{bilinear-to-linear} gives the restricted strong type analogue of the required estimate, and real interpolation completes the proof.
\end{proof}

\section{Summing bounds on frusta and proof of Theorem \ref{unconditional result}}\label{summing uniform bounds}
Let $\fE_{\fru}(p \rightarrow q)$ denote the statement that $\|\fE f\|_q \lesssim \|f\|_p$ whenever $f$ is supported in $\Sigma_N$ for some $N \geq 0$.  We have shown, by Corollary \ref{unconditional result on frusta}, that $\fE_{\fru}(p \rightarrow q)$ holds for $(p,q)$ in (a superset of) the range required by Theorem \ref{unconditional result}.  In this section, we sum these uniform bounds and consequently prove Theorem \ref{unconditional result}.  Our argument will utilize the following Strichartz estimate for the Klein--Gordon equation (see \cite{Kato--Ozawa} and references therein):  If $r \in [2,\infty]$, $s \in [2,\frac{2d}{d-2}]$ (with $\frac{2d}{d-2} \colonequals \infty$ when $d = 2$), $(r,s) \neq (2,\infty)$, and $\frac{1}{r} = \frac{d-1+\theta}{2}(\frac{1}{2}-\frac{1}{s})$ for some $\theta \in [0,1]$, then
\begin{align}\label{Strichartz estimate}
\|\fE f\|_{L_t^rL_x^s} \lesssim \|\langle \cdot \rangle^{\frac{1}{r}-\frac{1}{s}}f\|_2.
\end{align}

\begin{lemma}\label{summing over annuli}
If $\fE_{\fru}(p_0 \rightarrow q_0)$ holds for some $(\frac{d}{d+2}q_0)' \leq p_0 \leq \min\{(\frac{d-1}{d+1}q_0)', q_0\}$, then $\fE(p \rightarrow q)$ holds whenever $q > q_0$ and $p' = \frac{p_0'}{q_0}q$.
\end{lemma}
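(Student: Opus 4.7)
The plan is to decompose $f$ across frusta, interpolate on each frustum against the diagonal Klein--Gordon Strichartz estimate, and sum via Littlewood--Paley. First, write $f = \sum_{N \geq 0} f_N$ with $f_N := f\chi_{\Sigma_N}$. Each $\fE f_N$ has spatial Fourier support in the annulus $\{|\xi|\sim 2^N\}$, so by the standard Littlewood--Paley inequality (applied pointwise in $t$) combined with Minkowski in $L^{q/2}_{t,x}$,
\[
\|\fE f\|_q \;\lesssim\; \Big(\sum_N \|\fE f_N\|_q^2\Big)^{1/2}.
\]
This uses $q\geq 2$, which is automatic in our setting: the hypothesis $p_0\leq q_0$ combined with $p_0 \geq (\tfrac{d}{d+2}q_0)'$ forces $q_0 \geq 2+\tfrac{2}{d} > 2$, and the conclusion assumes $q>q_0$.

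Next, set $\tilde q := 2q_0/p_0'$. A short check shows the hypothesis $(\tfrac{d}{d+2}q_0)'\leq p_0 \leq (\tfrac{d-1}{d+1}q_0)'$ is equivalent to $\tilde q \in [\tfrac{2(d+2)}{d},\tfrac{2(d+1)}{d-1}]$, i.e., precisely the diagonal range of \eqref{Strichartz estimate}. Taking $r=s=\tilde q$ there the $\langle\xi\rangle$-weight disappears, giving $\|\fE f_N\|_{\tilde q} \lesssim \|f_N\|_2$. Riesz--Thorin interpolation with the hypothesized $\fE_\fru(p_0\to q_0)$ at the parameter $\theta$ satisfying $\tfrac{1}{q}=\tfrac{\theta}{q_0}+\tfrac{1-\theta}{\tilde q}$ yields
\[
\|\fE f_N\|_q \lesssim \|f_N\|_{p_0}^\theta\|f_N\|_2^{1-\theta}.
\]
A direct algebraic identity confirms that $p'=(p_0'/q_0)q$ is equivalent to $\tfrac{1}{p}=\tfrac{\theta}{p_0}+\tfrac{1-\theta}{2}$, so for a characteristic function $f_N=\chi_{A_N}$ the right-hand side collapses to $\mu(A_N)^{1/p}$.

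Finally, it suffices by real interpolation with nearby strong-type bounds on the open ray to prove the restricted strong type $\|\fE\chi_A\|_q \lesssim \mu(A)^{1/p}$; combining the preceding two steps,
\[
\|\fE\chi_A\|_q \lesssim \Big(\sum_N \mu(A_N)^{2/p}\Big)^{1/2} \leq \Big(\sum_N \mu(A_N)\Big)^{1/p} = \mu(A)^{1/p},
\]
the second inequality being $\ell^p\hookrightarrow\ell^2$, which requires $p\leq 2$. The main obstacle is the segment $p>2$ (i.e., $q\in(q_0,\tilde q)$, occurring only when $p_0>2$), where the inclusion $\ell^p\subset\ell^2$ reverses. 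My proposed remedy there is to use the mixed-norm form of \eqref{Strichartz estimate} with $r>s$: the weight $\langle\xi\rangle^{1/r-1/s}$ then provides a genuine geometric gain $2^{-N\alpha}$, $\alpha>0$, on each $\Sigma_N$, and interpolation with $\fE_\fru(p_0\to q_0)$ followed by direct Minkowski--Hölder summation in $N$ closes, with Bernstein inequalities in both $t$ and $x$ (exploiting that $\fE f_N$ is frequency localized in both variables) converting the resulting mixed-norm bound back to $L^q_{t,x}$. The delicate step is choosing $(r,s)$ in the Strichartz range so that the Bernstein losses are dominated by the Strichartz gain.
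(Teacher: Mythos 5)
The first half of your argument (Littlewood--Paley in $x$, the diagonal Strichartz estimate at $\tilde q=2q_0/p_0'$, interpolation along the line $p'=\frac{p_0'}{q_0}q$) is sound as far as it goes, and it does dispose of the regime $p\leq 2$ (modulo a small point: your Riesz--Thorin step only produces exponents $q$ \emph{between} $q_0$ and $\tilde q$, so for $q$ beyond $\tilde q$ — and for the entire range when $p_0\leq 2$, since then $\tilde q\leq q_0$ — you must instead interpolate against the trivial $\fE\colon L^1\to L^\infty$ bound, whose exponent pair $(1,\infty)$ also lies on this line). But the case you defer, $p>2$, is not a corner case: it is the main content of the lemma. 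In the application to Theorem \ref{unconditional result}, $p_0>2$ on both the parabolic and conic scaling lines, so the exponents $q\in(q_0,\tilde q)$, $p\in(2,p_0)$ are exactly the ones needed, and your square-function reduction forces an $\ell^2$ sum of $\mu(A_N)^{1/p}$, which cannot be dominated by $\mu(A)^{1/p}$ when $p>2$.

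Your proposed remedy for $p>2$ does not work. A \emph{linear} per-frustum gain $\|\fE f_N\|_q\lesssim 2^{-N\alpha}\|f_N\|_p$ with $\alpha>0$ is impossible on the conic scaling line $p'=\frac{d-1}{d+1}q$ (which the lemma must cover, e.g.\ when $p_0'=\frac{d-1}{d+1}q_0$): testing against $f_N=\chi_{\Sigma_N}$ as in Section \ref{negative result} gives $\|\fE\chi_{\Sigma_N}\|_q\gtrsim 2^{N(d-1-\frac{d+1}{q})}$ while $\|\chi_{\Sigma_N}\|_p\sim 2^{N\frac{d-1}{p}}$, and these exponents are equal precisely on that line. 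Moreover, converting $L^r_tL^s_x$ back to $L^q_{t,x}$ by Bernstein costs powers of the measure of the frequency support of $\fE f_N$, i.e.\ $2^{N(d+1)(\text{positive exponent})}$, which swamps the gain $2^{-N(1/r-1/s)}$. The idea you are missing is that the mixed-norm Strichartz weights must be deployed \emph{bilinearly}: estimating $\|\fE f_{N_1}\fE f_{N_2}\|_{q/2}$ by $\|\fE f_{N_1}\|_{L^{r_1}_tL^{s_1}_x}\|\fE f_{N_2}\|_{L^{r_2}_tL^{s_2}_x}$ with $\frac1{r_1}-\frac1{s_1}=-(\frac1{r_2}-\frac1{s_2})$ makes the absolute-frequency factors cancel and leaves decay $2^{-c|N_1-N_2|}$ in the \emph{separation} only; the diagonal $N_1=N_2$ then carries no gain but is covered by the hypothesis $\fE_{\fru}(p_0\to q_0)$. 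To sum such a bilinear bound one cannot use a square function (that would again need $p\leq2$); instead one expands $\|\fE f\|_q^q$ as a $2n$-fold product with $n=\lceil q/2\rceil$, applies H\"older to pair the most separated large piece with another, and ends with $\sum_N\|f\|_{L^p(\Sigma_N)}^q\leq\|f\|_p^q$, which requires only $p\leq q$. Without some device of this kind, the $p>2$ case remains open and the lemma is not proved.
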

\begin{proof}
We will show that the hypothesis of the lemma implies the following bilinear estimate:  Given $q > q_0$ and $p' = \frac{p_0'}{q_0}q$, there exists a constant $c > 0$ such that
\begin{align}\label{bilinear estimate}
\|\fE f_1 \fE f_2\|_{q/2} \lesssim 2^{-c|N_1-N_2|}\|f_1\|_p\|f_2\|_p
\end{align}
whenever $\supp f_1 \subseteq \Sigma_{N_1}$ and $\supp f_2 \subseteq \Sigma_{N_2}$  for some $N_1,N_2 \geq 0$.  Assuming the validity of \eqref{bilinear estimate}, we now demonstrate how $\fE(p \rightarrow q)$ follows.  The case $q = \infty$ is trivial, so we assume that $q < \infty$ and set $n \colonequals \lceil q/2 \rceil$. Fixing $f$, we have
\begin{align*}
\|\fE f\|_q^q = \bigg\|\sum_{N=0}^\infty \fE(f\chi_{\Sigma_N})\bigg\|_q^q \leq \sum_{N_1,\ldots,N_{2n} \geq 0}\bigg\|\prod_{j=1}^{2n}\fE(f\chi_{\Sigma_N})\bigg\|_\frac{q}{2n}^\frac{q}{2n}
\end{align*}
since $q \leq 2n$.  Given $\bN \in \{0,1,2\ldots\}^{2n}$, let $p(\bN) = (p_j(\bN))_{j=1}^{2n}$ be a permutation of $\bN$ such that $\|f\|_{L^p(\Sigma_{p_1(\bN)})}$ is maximal among $\|f\|_{L^p(\Sigma_{p_j(\bN)})}$ and $|p_1(\bN)-p_2(\bN)|$ is maximal among $|p_1(\bN)-p_j(\bN)|$.  Then, by H\"older's inequality, estimate \eqref{bilinear estimate}, and the fact that $p \leq q$ (which follows from our hypothesis), we see that
\begin{align*}
\sum_{N_1,\ldots,N_{2n} \geq 0}\bigg\|\prod_{j=1}^{2n}\fE(f\chi_{\Sigma_N})\bigg\|_\frac{q}{2n}^\frac{q}{2n} &\lesssim \sum_{\bN \colon p(\bN) = \bN} \bigg\|\prod_{j=1}^{2n}\fE(f\chi_{\Sigma_N})\bigg\|_\frac{q}{2n}^\frac{q}{2n}\\
&\leq \sum_{\bN \colon p(\bN) = \bN}\prod_{j=1}^n\|\fE(f\chi_{\Sigma_{p_{2j-1}(\bN)}})\fE(f\chi_{\Sigma_{p_{2j}(\bN)}})\|_\frac{q}{2}^\frac{q}{2n}\\
&\lesssim \sum_{\bN \colon p(\bN) = \bN} 2^{-\frac{cq}{2n}|p_1(\bN) - p_2(\bN)|}\|f\|_{L^p(\Sigma_{p_1(\bN)})}^q\\
&\lesssim \sum_{N_1,N_2 \geq 0}|N_1-N_2|^{2n-2}2^{-\frac{cq}{2n}|N_1-N_2|}\|f\|_{L^p(\Sigma_{N_1})}^q\\
&\lesssim \sum_{N_1 \geq 0}\|f\|_{L^p(\Sigma_{N_1})}^q\\
&\leq \|f\|_p^q.
\end{align*}
Thus, we have shown that $\fE(p \rightarrow q)$ holds.

We turn to the proof of \eqref{bilinear estimate}.  If $N_1 = N_2$, then the desired estimate is a consequence of $\fE_{\fru}(p_0 \rightarrow q_0)$, the Cauchy--Schwarz inequality, and interpolation.  Thus, we may assume that $N_1 < N_2$; in particular, we have $N_2 \geq 1$.  Now, let $q_1 \colonequals \frac{2q_0}{p_0'}$ and choose $r_1,s_1,r_2,s_2$ obeying the conditions $r_i \in [2,\infty]$, $s_i \in [2,\frac{2d}{d-2}]$ (with $\frac{2d}{d-2} \colonequals \infty$ when $d=2$), $(r_i,s_i) \neq (2,\infty)$, and $\frac{2}{r_i} + \frac{2p_0'}{(q_0-p_0')s_i} = \frac{p_0'}{q_0-p_0'}$, as well as $r_1 < s_1$ and $\frac{2}{q_1} = \frac{1}{r_1}+\frac{1}{r_2} = \frac{1}{s_1}+\frac{1}{s_2}$.  (For example, fixing an arbitrary $r_1 \in [\frac{d(q_0-p_0')}{p_0'}, q_1)$ determines such a choice.) The Strichartz estimate \eqref{Strichartz estimate} and our hypothesis imply that $\|\fE f\|_{L_t^{r_i}L_x^{s_i}} \lesssim \|\langle \cdot \rangle^{\frac{1}{r_i}-\frac{1}{s_i}}f\|_2$ for every $f$.  Thus, by the mixed-norm Cauchy--Schwarz inequality, we have
\begin{align}\label{L^2 estimate}
\notag\|\fE f_1\fE f_2\|_{q_1/2} &\lesssim \|\fE f_1\|_{L_t^{r_1}L_x^{s_1}}\|\fE f_2\|_{L_t^{r_2}L_x^{s_2}}\\ &\lesssim 2^{N_1(\frac{1}{r_1}-\frac{1}{s_1})}2^{N_2(\frac{1}{r_2}-\frac{1}{s_2})}\|f_1\|_2\|f_2\|_2 = 2^{-(\frac{1}{r_1}-\frac{1}{s_1})|N_1-N_2|}\|f_1\|_2\|f_2\|_2.
\end{align}
The estimate \eqref{bilinear estimate} now follows by interpolating \eqref{L^2 estimate} with either the trivial inequality $\|\fE f_1 \fE f_2\|_\infty \lesssim \|f_1\|_1\|f_2\|_1$, if $q \geq q_1$, or the estimate $\|\fE f_1 \fE f_2\|_{q_0/2} \lesssim \|f_1\|_{p_0}\|f_2\|_{p_0}$ (a consequence of our hypothesis), if $q < q_1$.  
\end{proof}

{\bf Proof of Theorem \ref{unconditional result}.} Together, Corollary \ref{unconditional result on frusta} and Lemma \ref{summing over annuli} imply the theorem, except for the estimates $\fE(q \rightarrow q)$ with $\frac{10}{3} < q <4$ when $d = 2$.  The latter bounds can be obtained by (straightforwardly) adapting the proof of \cite[Lemma 8.2]{Bruce--Oliveira e Silva--Stovall}. \qed

\section{Conic decoupling and proof of Theorem \ref{conditional result}}\label{decoupling}

In this section, we prove our conditional result, Theorem \ref{conditional result}.  We will argue as follows:  To prove global extension estimates, it suffices to obtain uniform estimates on dyadic frusta, according to Lemma \ref{summing over annuli}.  By Lemma \ref{bilinear-to-linear}, these bounds would follow from appropriate bilinear estimates between $(N,k)$-sectors.  Lemma \ref{bilinear estimate for wide sectors} provides one such bilinear estimate, with a very favorable constant (relative to the hypotheses of Lemma \ref{bilinear-to-linear}) but valid only for $q$ in the bilinear range.  As we will show, the conic decoupling theorem of \cite{Bourgain--Demeter} and the hypothesis of Theorem \ref{conditional result} together imply a second bilinear estimate, with a worse constant but a smaller value of $q$.  Interpolation then leads to a compromise, wherein Lemma \ref{bilinear-to-linear} may be applied for a small set of exponents that nevertheless improves on the bilinear range.  After some arithmetic, the admissible exponents work out to be those satisfying \eqref{condition on exponents}.

We now turn to the details, beginning with the following elementary estimate:

\begin{lemma}\label{angular separation estimate}
If $u \in \R^n$ for some $n \geq 1$ and $x,y \in \R^2 \setminus \{0\}$ with $|x|,|y| \geq |u|$, then
\begin{align*}
\bigg\vert\frac{(x,u)}{|(x,u)|} - \frac{(y,u)}{|(y,u)|}\bigg\vert \geq \frac{1}{4}\bigg\vert\frac{x}{|x|}-\frac{y}{|y|}\bigg\vert.
\end{align*}
\end{lemma}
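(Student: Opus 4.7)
Write $X := (x,u)$ and $Y := (y,u)$, viewed as vectors in $\R^{2+n}$. The natural first move is to decompose along the $\R^2 \oplus \R^n$ splitting: the last $n$ coordinates of $X/|X| - Y/|Y|$ equal $u\bigl(1/|X| - 1/|Y|\bigr)$, which contribute nonnegatively to the squared norm. Hence by Pythagoras, it suffices to prove the stronger two-dimensional estimate
\[
\left|\frac{x}{|X|} - \frac{y}{|Y|}\right| \;\geq\; \tfrac{1}{4}\,\left|\frac{x}{|x|} - \frac{y}{|y|}\right|.
\]
This reduction is the only conceptual step; everything that follows is algebra.

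\textbf{Main computation.} Set $\hat{x} := x/|x|$, $\hat{y} := y/|y|$, $a := |x|/|X|$, and $b := |y|/|Y|$, so the left-hand side above becomes $|a\hat{x} - b\hat{y}|$. The hypothesis $|u| \leq \min\{|x|,|y|\}$ gives $|X| \leq \sqrt{2}\,|x|$ and $|Y| \leq \sqrt{2}\,|y|$, so that $a, b \in [1/\sqrt{2},\,1]$, and in particular $ab \geq 1/2$. Expanding and using the polarization identity $\hat{x}\cdot\hat{y} = 1 - \tfrac{1}{2}|\hat{x}-\hat{y}|^2$, I would record the identity
\[
|a\hat{x} - b\hat{y}|^2 \;=\; (a-b)^2 + ab\,|\hat{x}-\hat{y}|^2 \;\geq\; \tfrac{1}{2}\,|\hat{x}-\hat{y}|^2,
\]
and then take square roots to obtain the bound with constant $1/\sqrt{2}$, which comfortably beats the asserted $1/4$.

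\textbf{Obstacles.} I do not anticipate any genuine difficulty: the argument reduces to the single algebraic identity above together with the trivial Pythagorean reduction. The factor $1/4$ in the statement is evidently a generous, non-sharp constant chosen for later convenience in Section~\ref{decoupling}; the author's proof is likely essentially identical, or possibly phrased directly in terms of angles between the relevant vectors rather than via the explicit identity.
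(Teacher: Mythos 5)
Your proof is correct, and it takes a genuinely different (and cleaner) route than the paper. The paper fixes coordinates by rotating $\R^2$ so that $y$ points along $(1,0)$, writes $x=|x|(\cos\theta,\sin\theta)$, expands $\vert\frac{(x,u)}{|(x,u)|}-\frac{(y,u)}{|(y,u)|}\vert^2$ explicitly, and then bounds it below by $\theta^2/16$ using the Taylor bound $\cos\theta\le 1-\frac{\theta^2}{2}+\frac{\theta^4}{24}$, the Cauchy--Schwarz inequality $|x||y|+|u|^2\le |(x,u)||(y,u)|$, and the hypothesis $|x|,|y|\ge|u|$ (to get $\frac{2|x||y|}{|(x,u)||(y,u)|}\ge 1$); the final constant $1/4$ comes from comparing $\frac{\theta^2}{2}-\frac{\theta^4}{24}$ with $\frac{\theta^2}{16}$ on $|\theta|\le\pi$. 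Your argument replaces all of this trigonometry with the Pythagorean projection onto the $\R^2$ factor followed by the exact identity $|a\hat x-b\hat y|^2=(a-b)^2+ab\,|\hat x-\hat y|^2$, and the hypothesis enters only through $a,b\in[1/\sqrt2,1]$, hence $ab\ge\frac12$. Both steps check out, and you obtain the sharper constant $1/\sqrt2$ in place of $1/4$. Two incidental advantages of your version: it avoids any Taylor expansion or case analysis on the angle, and it never uses that $x,y$ live in $\R^2$, so it proves the lemma verbatim for $x,y\in\R^m\setminus\{0\}$ for any $m$. The paper's phrasing via the angle $\theta$ is what it actually needs downstream (the quantity $|A(\rho,\nu)-A(\rho',\nu')|$ in the proof of Lemma \ref{decoupling lemma} is an angle difference), but since $|\theta|\ge\vert\hat x-\hat y\vert$ your stronger inequality still suffices there.
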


\begin{proof}
By a rotation of $\R^2$, we may assume that $y_1 > 0$ and $y_2 = 0$.  Let $\theta \in [-\pi,\pi]$ denote the angle between the vectors $x$ and $(1,0)$, so that $x = |x|(\cos\theta,\sin\theta)$.  Noting that $|\theta| \geq \vert\frac{x}{|x|}-\frac{y}{|y|}\vert$, it suffices to show that
\begin{align}\label{lower bound by theta}
\bigg\vert\frac{(x,u)}{|(x,u)|} - \frac{(y,u)}{|(y,u)|}\bigg\vert \geq \frac{|\theta|}{4}.
\end{align}
We find, by a bit of algebra, that
\begin{align*}
\bigg\vert\frac{(x,u)}{|(x,u)|} - \frac{(y,u)}{|(y,u)|}\bigg\vert^2 = \frac{2(|(x,u)||(y,u)| - |x||y|\cos(\theta) - |u|^2)}{|(x,u)||(y,u)|}.
\end{align*}
Due to the bound $\cos \theta \leq 1-\frac{\theta^2}{2}+\frac{\theta^4}{24}$ (which follows from Taylor's theorem), the Cauchy--Schwarz inequality, and the hypothesis that $|x|,|y| \geq |u|$, the right-hand side is bounded below by
\begin{align*}
\frac{2(|(x,u)||(y,u)| - |x||y| - |u|^2)}{|(x,u)||(y,u)|} + \frac{2|x||y|}{|(x,u)||(y,u)|}\bigg(\frac{\theta^2}{2}-\frac{\theta^4}{24}\bigg) \geq \frac{\theta^2}{2}-\frac{\theta^4}{24} \geq \frac{\theta^2}{16},
\end{align*}
completing the proof.
\end{proof}

The following consequence of conic decoupling is the technical heart of this section:

\begin{lemma}\label{decoupling lemma}
Suppose that $\fE_0(p \rightarrow q)$ holds for some $p \geq 2$ and $2 \leq q \leq \frac{2(d+1)}{d-1}$.  Then
\begin{align*}
\|\fE f\|_q \lesssim_\varepsilon 2^{(N-k)((d-1)(\frac{1}{2}-\frac{1}{p})+\varepsilon)}\|f\|_p
\end{align*}
for every $\varepsilon > 0$ whenever $f$ is supported in an $(N,k)$-sector.
\end{lemma}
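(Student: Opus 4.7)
The strategy is to decompose the $(N,k)$-sector into $(N,N)$-sub-sectors, apply a conic $\ell^2$-decoupling inequality at exponent $q$, apply the hypothesis $\fE_0(p\to q)$ to each sub-sector individually via Lemma~\ref{linear estimates for thin sectors}, and use that $p \geq 2$ to convert the resulting $\ell^2$-sum into an $\ell^p$-sum controlled by $\|f\|_p$.

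In detail, fix a maximal $2^{-N}$-separated subset $\{\omega_j\}_{j=1}^M \subset \{\omega' \in \bbS^{d-1} \colon |\omega'-\omega| \leq 2^{-k}\}$, so that $M \sim 2^{(N-k)(d-1)}$ and the sub-sectors $\Sigma_{N,N}^{\omega_j}$ cover $\Sigma_{N,k}^\omega$ with bounded overlap, and set $f_j \colonequals f\chi_{\Sigma_{N,N}^{\omega_j}}$. The $(N,k)$-sector is, up to a standard rescaling, a conic cap of angular width $2^{-k}$ partitioned into conic sub-caps of angular width $2^{-N}$, so the Bourgain--Demeter conic decoupling theorem \cite{Bourgain--Demeter} yields
\begin{equation*}
\|\fE f\|_q \lesssim_\varepsilon 2^{(N-k)\varepsilon}\bigg(\sum_{j=1}^M \|\fE f_j\|_q^2\bigg)^{1/2}
\end{equation*}
for every $\varepsilon > 0$ and every $q \in [2, \tfrac{2(d+1)}{d-1}]$. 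By Lemma~\ref{linear estimates for thin sectors}, the hypothesis $\fE_0(p\to q)$ gives $\|\fE f_j\|_q \lesssim \|f_j\|_p$ for each $j$; then, since $p \geq 2$, H\"older's inequality and the essential disjointness of the supports of the $f_j$ produce
\begin{equation*}
\bigg(\sum_{j=1}^M \|f_j\|_p^2\bigg)^{1/2} \leq M^{1/2-1/p}\bigg(\sum_{j=1}^M \|f_j\|_p^p\bigg)^{1/p} \lesssim 2^{(N-k)(d-1)(1/2-1/p)}\|f\|_p.
\end{equation*}
Combining these estimates yields the claimed bound.

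The main obstacle is justifying the conic decoupling step for the hyperboloid at the relevant scale. After the rescaling $\xi \mapsto 2^N \xi$ (which preserves angular widths), the sector $\Sigma_{N,k}^\omega$ becomes a subset of the surface $\tau = \sqrt{|\xi|^2 + 2^{-2N}}$; this surface lies within an $O(2^{-2N})$-neighborhood of the light cone $\tau = |\xi|$, and the $(N,N)$-sub-caps of angular width $2^{-N}$ are precisely those of Bourgain--Demeter scale $\delta^{1/2}$ for the decoupling parameter $\delta \sim 2^{-2N}$, so the theorem for the cone applies directly. Lemma~\ref{angular separation estimate} enters in confirming that angular separations among the sub-cap centers remain comparable under the intermediate rescaling and any auxiliary Lorentz boosts needed to set up this decoupling.
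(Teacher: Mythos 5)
Your overall strategy is the same as the paper's: rescale to a neighborhood of the cone, apply Bourgain--Demeter conic decoupling into pieces matching the $(N,N)$-sectors, invoke $\fE_0(p\to q)$ on each piece via Lemma~\ref{linear estimates for thin sectors}, and use H\"older with $p\ge 2$ to pass from the $\ell^2$-sum to $\|f\|_p$ at the cost of $M^{1/2-1/p}$ with $M\sim 2^{(N-k)(d-1)}$. That part of the accounting is correct. However, there is a genuine gap in the decoupling step: with your normalization (only the isotropic conic dilation $\xi\mapsto 2^N\xi$), the rescaled surface lies in the $\delta$-neighborhood of the truncated cone with $\delta\sim 2^{-2N}$, since that is the thickness needed for the plates of angular width $\delta^{1/2}=2^{-N}$ to coincide with the images of the $(N,N)$-sectors. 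The Bourgain--Demeter theorem then loses a factor $\delta^{-\varepsilon}=2^{2N\varepsilon}$, \emph{not} $2^{(N-k)\varepsilon}$ as you wrote. This is fatal for the lemma as stated: taking, say, $k=N-1$, the claimed constant is $O(1)$ uniformly in $N$, whereas your argument produces a constant $\gtrsim 2^{2N\varepsilon}$ that blows up as $N\to\infty$. The loss also cannot be absorbed downstream, since Lemma~\ref{bilinear-to-linear} requires a constant of the pure form $2^{(N-k)\alpha}$.

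The fix is precisely the extra anisotropic map $L$ in the paper's proof: after the conic dilation one applies a ``Lorentz rescaling'' of the cone that expands the $2^{-k}$-angular sector to unit angular width (stretching the angularly tangent directions by $2^{k}$ and the normal direction by $2^{2k}$). The image then sits in a $\delta$-neighborhood of the cone with $\delta\sim 2^{2(k-N)}$, so the decoupling loss becomes $\delta^{-\varepsilon}\sim 2^{2(N-k)\varepsilon}$, which is of the required form. The price of this anisotropic rescaling is that it distorts angles nonuniformly, so one must verify that each decoupling plate of angular width $\delta^{1/2}=2^{k-N}$ meets the images of only $O(1)$ many $(N,N)$-sectors; this is the paper's claim \eqref{bounded number of intersections}, whose proof is where Lemma~\ref{angular separation estimate} is actually used (to show the map expands angular separations by a factor $\gtrsim 2^{k}$). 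Your closing remark gestures at ``auxiliary Lorentz boosts'' but does not carry out this step, and without it the stated bound does not follow.
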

\begin{proof}
By rotational symmetry, it suffices to prove the lemma for functions $f$ supported in the sector $\Sigma_{N,k}^{e_1}$.  We may also assume that $k > C$ and $N-k > C$, where $C$ is a positive integer of our choice.  Indeed, if $k \leq C$, then we can cover $\Sigma_{N,k}^{e_1}$ by a bounded number of $(N,C)$-sectors.  Similarly, if $N-k \leq C$, then $\Sigma_{N,k}^{e_1}$ is covered by a bounded number of $(N,N)$-sectors, and the required estimate is a consequence of the hypothesis $\fE_0(p \rightarrow q)$ and Lemma \ref{linear estimates for thin sectors}.

We proceed by rescaling the extension estimate on $\Sigma_{N,k}^{e_1}$ to one on a nearly conic set of unit size. There, the conic decoupling theorem, \cite[Theorem 1.2]{Bourgain--Demeter}, can be directly applied.  Let ${e}_1,\ldots,{e}_d$ denote the standard basis vectors in $\R^d$, and let $M \colonequals LD$, where $D$ is the conic dilation $D(\tau,\xi) \colonequals 2^{-N}(\tau,\xi)$ and $L$ is the linear map satisfying
\begin{align*}
L(0,e_j) &= 2^{(k-C)}(0,e_j),\quad\quad 2 \leq j \leq d,\\
L(1,e_1) &= (1,e_1),\\
L(-1,e_1) &= 2^{2(k-C)}(-1,e_1).
\end{align*}
One easily checks that $D(\Sigma_{N,k}^{e_1})$ lies in an $O(2^{-2N})$-neighborhood of the conic sector
\begin{align*}
\Gamma \colonequals \bigg\{(|\xi|,\xi) \colon 1 \leq |\xi| \leq 2,~\bigg\vert\frac{\xi}{|\xi|} - e_1\bigg\vert \leq 2^{-k}\bigg\}.
\end{align*}
The vectors $(0,e_2),\ldots,(0,e_d)$ are angularly tangent to the cone at $(1,e_1)$, while the vector $(1,e_1)$ is radially tangent and $(-1,e_1)$ is normal.  The map $L$ preserves the cone and expands $\Gamma$ to a sector of (roughly) unit angular width contained in the frustum
\begin{align*}
\tilde{\Gamma} \colonequals \{(|\xi|,\xi) \colon 1 \leq |\xi| \leq 3\}.
\end{align*}
Setting $\delta \colonequals 2^{2(k-N)}$ and assuming $C$ is sufficiently large, $M(\Sigma_{N,k}^{e_1})$ lies in the $\delta$-neighborhood of $\tilde{\Gamma}$.  Let $M_\ast \mu$ be the pushforward of $\mu$ by $M$, that is,
\begin{align*}
\int_{M(\Sigma)} g dM_\ast \mu \colonequals \int_\Sigma g \circ M d\mu,
\end{align*}
and let $\fE^M g \colonequals \widecheck{gdM_\ast \mu}$.  Let $\fP$ be a partition of the $\delta$-neighborhood of $\tilde{\Gamma}$ into plates of angular width $\delta^{1/2}$, thickness $\delta$, and length $1$, as in \cite[Theorem 1.2]{Bourgain--Demeter}.  Then
\begin{align}\label{decoupling consequence}
\|\fE^M g\|_q \lesssim_\varepsilon \delta^{-\varepsilon}\bigg(\sum_{\theta \in \fP}\|\fE^M(g\chi_{\theta'})\|_q^2\bigg)^\frac{1}{2}
\end{align}
for all $g$ supported in $M(\Sigma_{N,k}^{e_1})$, where $\theta' \colonequals \theta \cap M(\Sigma_{N,k}^{e_1})$.  Let $\fQ$ be a covering of $\Sigma_{N,k}^{e_1}$ by $(N,N)$-sectors having bounded overlap, and let $\{\psi_\kappa\}_{\kappa \in \fQ}$ be partition of unity with $\supp \psi_\kappa \subseteq \kappa$.  We claim that each $\theta \in \fP$ obeys the bound
\begin{align}\label{bounded number of intersections}
\#\{\kappa \in \fQ \colon \kappa \cap M^{-1}(\theta') \neq \emptyset\} \lesssim 1.
\end{align}
If \eqref{bounded number of intersections} holds, then taking $g = f \circ M^{-1}$ in \eqref{decoupling consequence}, rescaling, applying the hypothesis $\fE_0(p \rightarrow q)$ using Lemma \ref{linear estimates for thin sectors}, and finally applying H\"older's inequality and summing, we find that
\begin{align*}
\|\fE f\|_q &\lesssim_\varepsilon \delta^{-\varepsilon}\bigg(\sum_{\theta \in \fP}\|\fE(f\chi_{M^{-1}(\theta')})\|_q^2\bigg)^\frac{1}{2}\\
&\lesssim \delta^{-\varepsilon}\bigg(\sum_{\theta \in \fP}\sum_{\kappa \in \fQ}\|\fE(f\psi_\kappa\chi_{M^{-1}(\theta')})\|_q^2\bigg)^\frac{1}{2}\\
&\lesssim \delta^{-\varepsilon}\bigg(\sum_{\theta \in \fP}\sum_{\kappa \in \fQ}\|f\psi_\kappa\|_{L^p(M^{-1}(\theta'))}^2\bigg)^\frac{1}{2}\\
&\lesssim 2^{(N-k)((d-1)(\frac{1}{2}-\frac{1}{p})+2\varepsilon)}\|f\|_p.
\end{align*}
Since $\varepsilon$ is arbitrary, the proof is complete modulo the claim \eqref{bounded number of intersections}.

Toward proving \eqref{bounded number of intersections}, fix $\theta \in \fP$ and let
\begin{align*}
n \colonequals \#\{\kappa \in \fQ \colon \kappa \cap M^{-1}(\theta') \neq \emptyset\}.
\end{align*} 
To avoid notational annoyances, we will assume that $d \geq 3$ for the remainder of this argument.  The case $d=2$ is similar, but easier, and essentially appears in \cite{Bruce--Oliveira e Silva--Stovall}.  Given two points $(\tau,\xi),(\tau',\xi') \in \R \times \R^d$, let
\begin{align*}
\dist_{\ang}((\tau,\xi),(\tau',\xi')) \colonequals \bigg\vert\frac{\xi}{|\xi|} - \frac{\xi'}{|\xi'|}\bigg\vert
\end{align*}
denote their angular separation.  Suppose that $\Sigma_{N,k}^{e_1} \cap M^{-1}(\theta)$ has angular width at most $n^{\frac{1}{2(d-1)}}2^{-N}$.  Since $\fQ$ has bounded overlap, it follows that
\begin{align*}
n \lesssim \frac{(n^\frac{1}{2(d-1)}2^{-N})^{d-1}}{2^{-N(d-1)}} = n^\frac{1}{2},
\end{align*}
and thus $n \lesssim 1$.  We may assume, therefore, that there exist points $(\tau,\xi),(\tau',\xi') \in \Sigma_{N,k}^{e_1} \cap M^{-1}(\theta)$ such that $\dist_{\ang}((\tau,\xi),(\tau',\xi')) \geq n^\frac{1}{2(d-1)}2^{-N}$.  Since $\theta$ has angular width $O(2^{k-N})$, it suffices to show that
\begin{align}\label{target estimate}
\dist_{\ang}(M(\tau,\xi),M(\tau',\xi')) \gtrsim 2^k\dist_{\ang}((\tau,\xi),(\tau',\xi')).
\end{align}
We proceed by exploiting symmetry.  First, we observe that
\begin{align}\label{radial scaling}
\dist_{\ang}(M(\tau,\xi),M(\tau',\xi')) = \dist_{\ang}(M(\langle \lambda^{-1}\rangle, \lambda^{-1}\xi), M(\langle (\lambda')^{-1}\rangle, (\lambda')^{-1}\xi')),
\end{align}
where $\lambda \colonequals |\xi|$, $\lambda' \colonequals |\xi'|$, and $\langle x \rangle \colonequals \sqrt{x^2+1}$.  Next, we utilize rotational invariance. Let $R \colonequals I_2 \oplus S$, where $I_2$ is the $2 \times 2$ identity matrix and $S$ is a rotation of $\R^{d-1}$ satisfying
\begin{align*}
S\bigg(\frac{\lambda^{-1}(\xi_2,\ldots,\xi_d)-(\lambda')^{-1}(\xi_2',\ldots,\xi_d')}{|\lambda^{-1}(\xi_2,\ldots,\xi_d)-(\lambda')^{-1}(\xi_2',\ldots,\xi_d')|}\bigg) = (1,0,\ldots,0).
\end{align*}
(One can check that $\lambda^{-1}(\xi_2,\ldots,\xi_d) \neq (\lambda')^{-1}(\xi_2',\ldots,\xi_d')$.)  The maps $M$ and $R$ commute, and $R$ (and thus $R^{-1}$) preserves angular separation.  Setting $(\rho,\zeta) \colonequals R(\langle \lambda^{-1} \rangle, \lambda^{-1}\xi)$ and $(\rho',\zeta') \colonequals R(\langle(\lambda')^{-1}\rangle, (\lambda')^{-1}\xi')$ and using \eqref{radial scaling}, we see that
\begin{align}\label{combined symmetry}
\dist_{\ang}(M(\tau,\xi),M(\tau',\xi')) = \dist_{\ang}(M(\rho,\zeta),M(\rho',\zeta')).
\end{align}
The definitions of $\Sigma_{N,k}^{e_1}$, $\lambda$, $\lambda'$, and $R$ imply the following:
\begin{itemize}
\item[(i)]{$|\zeta| = |\zeta'| = 1$ and $|\zeta - e_1|,|\zeta' - e_1| \leq 2^{-k}$;}
\item[(ii)]{$\dist_{\ang}((\tau,\xi),(\tau,\xi')) = |\zeta -\zeta'|$;}
\item[(iii)]{$\zeta_j = \zeta_j' =: a_j$ for all $j \in \{3,\ldots,d\}$;}
\item[(iv)]{$1 \leq \rho,\rho' \leq 1+ 2^{-2N}$;}
\end{itemize}
We write $(\rho,\zeta) = (\rho, r\cos\nu, r\sin\nu, a_3,\ldots, a_d)$, where $r \colonequals \sqrt{1-a_3^2 - \cdots - a_d^2} = \sqrt{\zeta_1^2+\zeta_2^2}$ and $\nu \colonequals \arctan(\zeta_2/\zeta_1)$, and we record that $1- 2^{-2k} \leq r \leq 1$ and $|\nu| \leq 2^{1-k}$ if $C$ is sufficiently large. We compute that $M(\rho,\zeta) = 2^{-N-1}(m_1(\rho,\nu), m_2(\rho,\nu),m_3(\rho,\nu),u)$, where
\begin{align*}
m_1(x,y) &\colonequals (1+2^{2(k-C)})x + (1-2^{2(k-C)})r\cos y,\\
m_2(x,y) &\colonequals (1-2^{2(k-C)})x + (1+2^{2(k-C)})r\cos y,\\
m_3(x,y) &\colonequals 2^{k-C+1}r\sin y,\\
u &\colonequals 2^{k-C+1}(a_3,\ldots,a_d).
\end{align*}
One easily checks that if $C$ is sufficiently large, then $|u| \leq 1$ and $1 \leq m_2(x,y) \leq 2$ and $|m_3(x,y)| \leq 2$ whenever $1 \leq x \leq 1+2^{-2N}$ and $|y| \leq 2^{1-k}$.  Writing, analogously, $(\rho',\zeta') = (\rho', r\cos\nu', r\sin\nu', a_3,\ldots,a_d)$ and using \eqref{combined symmetry} and Lemma \ref{angular separation estimate}, we find that
\begin{align}\label{reduction to angle difference}
\notag\dist_{\ang}(M(\tau,\xi),M(\tau',\xi')) &= \bigg\vert\frac{(m_2(\rho,\nu),m_3(\rho,\nu),u)}{|(m_2(\rho,\nu),m_3(\rho,\nu),u)|} - \frac{(m_2(\rho',\nu'),m_3(\rho',\nu'),u)}{|(m_2(\rho',\nu'),m_3(\rho',\nu'),u)|}\bigg\vert\\
\notag&\gtrsim \bigg\vert\frac{(m_2(\rho,\nu),m_3(\rho,\nu))}{|(m_2(\rho,\nu),m_3(\rho,\nu))|} - \frac{(m_2(\rho',\nu'),m_3(\rho',\nu'))}{|(m_2(\rho',\nu'),m_3(\rho',\nu'))|}\bigg\vert\\
&\sim|A(\rho,\nu) - A(\rho',\nu')|,
\end{align}
where
\begin{align*}
A(x,y) \colonequals \arctan\bigg(\frac{m_3(x,y)}{m_2(x,y)}\bigg).
\end{align*}
Using the mean value theorem and bounds on the $m_j$, we find that
\begin{align}\label{x difference}
|A(\rho,\nu)-A(\rho,\nu')| \geq |\nu-\nu'|\inf_{|y| \leq 2^{1-k}}|\partial_2A(\rho,y)| \gtrsim 2^{-C}2^k|\nu - \nu'|
\end{align}
and
\begin{align}\label{y difference}
|A(\rho,\nu')-A(\rho',\nu')| \leq |\rho-\rho'|\sup_{1 \leq x \leq 1+2^{-2N}}|\partial_1A(x,\nu')| \lesssim 2^{-2C}2^{-2N}2^{2k} \leq 2^{-3C}2^k2^{-N},
\end{align}
where the implicit constants do not depend on $C$.  Since
\begin{align*}
2^{-N} \lesssim \dist_{\ang}((\tau,\xi),(\tau',\xi')) = |\zeta - \zeta'| = |(\zeta_1,\zeta_2)-(\zeta_1',\zeta_2')| \sim |\nu - \nu'|,
\end{align*}
\eqref{target estimate} follows from \eqref{reduction to angle difference}--\eqref{y difference} after fixing $C$ sufficiently large.
\end{proof}

{\bf Proof of Theorem \ref{conditional result}.}  By Lemma \ref{summing over annuli}, it suffices to prove that $\fE_{\fru}(p \rightarrow q)$ holds for all $(p,q)$ satisfying the hypotheses of the theorem.  We have assumed that $\fE_0(p_0 \rightarrow q_0)$ holds with $p_0' = \frac{d}{d+2}q_0$ for some $q_0 < \frac{2(d+3)}{d+1}$.  Necessarily $p_0 \geq 2$, so by the Cauchy--Schwarz inequality and Lemma \ref{decoupling lemma}, we have
\begin{align*}
\|\fE f_1\fE f_2\|_{q_0/2} \lesssim_\varepsilon 2^{(N-k)((d-1)(1-\frac{2}{p_0})+2\varepsilon)}\|f_1\|_{p_0}\|f_2\|_{p_0}
\end{align*}
for every $\varepsilon > 0$ whenever $f_1$ and $f_2$ are supported in $(N,k)$-sectors.  Given $q_1 > \frac{2(d+3)}{d+1}$, we also have
\begin{align*}
\|\fE f_1 \fE f_2\|_{q_1/2} \lesssim 2^{(N-k)(d-1 - \frac{2(d+1)}{q_1})}\|f_1\|_2\|f_2\|_2
\end{align*}
by Lemma \ref{bilinear estimate for wide sectors}, provided $f_1$ and $f_2$ are supported in related $(N,k)$-sectors.  Interpolating these estimates, we see that
\begin{align*}
\|\fE f_1 \fE f_2\|_{q_t/2} \lesssim_\varepsilon 2^{(N-k)\alpha_t}\|f_1\|_{r_t}\|f_2\|_{r_t},
\end{align*}
where
\begin{align*}
\bigg(\frac{1}{r_t},\frac{1}{q_t}\bigg) &\colonequals (1-t)\bigg(\frac{1}{p_0},\frac{1}{q_0}\bigg) + t\bigg(\frac{1}{2},\frac{1}{q_1}\bigg),\\
\alpha_t &\colonequals (1-t)\bigg((d-1)\bigg(1-\frac{2}{p_0}\bigg)+2\varepsilon\bigg) + t\bigg(d-1-\frac{2(d+1)}{q_1}\bigg)
\end{align*}
for $t \in [0,1]$.  Given $q \in (q_0, \frac{2(d+3)}{d+1})$, let $t$ be such that $q = q_t$, and suppose that 
\begin{align}\label{initial condition on exponents}
\frac{1}{p} > \frac{\alpha_t}{2(d-1)} + \frac{1}{r_t}.
\end{align}
We may apply Lemma \ref{bilinear-to-linear} to obtain the estimate
\begin{align}\label{restricted strong type estimate}
\|\fE f\|_q \lesssim \|f\|_p
\end{align}
whenever $|f| \sim \chi_{\Omega}$ for some $\Omega$ contained in some $\Sigma_N$. Indeed, the hypotheses (i)--(iv) in Lemma \ref{bilinear-to-linear} hold with $r = r_t$ and $\alpha = \alpha_t$, and the estimate $\fE_0(p \rightarrow q)$ is a consequence of interpolating $\fE_0(p_0 \rightarrow q_0)$ and $\fE_0(1 \rightarrow \infty)$ and applying H\"older's inequality.   Letting $q_1 \rightarrow \frac{2(d+3)}{d+1}$ and $\varepsilon \rightarrow 0$, the condition \eqref{initial condition on exponents} becomes \eqref{condition on exponents}, and thus \eqref{restricted strong type estimate} extends to all $p$ satisfying \eqref{condition on exponents}.  Real interpolation now implies that $\fE_{\fru}(p \rightarrow q)$ holds in the required range.  \qed

\section{Possible improvements via local estimates for elliptic surfaces}\label{improvements}
In this final section, we discuss some likely improvements to Theorem \ref{unconditional result} by means of the state-of-the-art local extension estimates for elliptic surfaces (as defined in, e.g., \cite{Tao--Vargas--Vega}).  We will ignore some details for simplicity, and thus we do not claim any improvement definatively.  

As described in the introduction, the validity of the local estimate $\fE_0(p \rightarrow q)$ on the parabolic scaling line $p' = \frac{d}{d+2}q$ for some $q < \frac{2(d+3)}{d+1}$ would imply an improvement of Theorem \ref{unconditional result} by a direct application of our conditional result, Theorem \ref{conditional result}.  Such an estimate appears to follow from known results:  Let $\fE_{\ellip}^d(p \rightarrow q)$ denote the statement that for every elliptic phase $\phi : [-1,1]^d \rightarrow \R$, the associated extension operator
\begin{align*}
\fE_\phi f(t,x) \colonequals \int_{[-1,1]^d}e^{i(t,x)\cdot(\phi(\xi),\xi)}f(\xi)d\xi
\end{align*}
is bounded from $L^p([-1,1]^d)$ to $L^q(\R \times \R^d)$ with operator norm depending only on $p$, $q$, $d$, and the parameters used to define ellipticity.  (In particular, $\fE_{\ellip}^d(p \rightarrow q)$ would imply $\fE_0(p \rightarrow q)$.)  Hickman and Rogers \cite{Hickman--Rogers} have shown that for each $d \geq 2$, there exists some $q_d < \frac{2(d+3)}{d+1}$ such that  $\fE_{\ellip}^d(q\rightarrow q)$ holds whenever $q > q_d$.  (Their result is stated for paraboloids, but an adaptation of their methods yields estimates for general elliptic surfaces; see \cite[Remark 11.3]{Hickman--Rogers} and references therein.) One can move their estimate to the scaling line $p' = \frac{d}{d+2}q$ in a standard way, but with a loss in the range of $q$.  Namely, one interpolates the bilinear version of $\fE_{\ellip}^d(q \rightarrow q)$ (from the Cauchy--Schwarz inequality) with the $L^2$-based bilinear extension estimate for elliptic surfaces (see \cite{Tao}) and then obtains a linear estimate on the scaling line using the bilinear-to-linear method, \cite[Theorem 2.2]{Tao--Vargas--Vega}.  In the end, these steps reveal that $\fE_{\ellip}^d(p \rightarrow q)$ holds with $p' = \frac{d}{d+2}q$ whenever
\begin{align*}
q > \tilde{q}_d \colonequals \frac{(2q_d-4)d^2+(6q_d-16)d+2q_d-12}{(q_d-2)d^2+(q_d-4)d}.
\end{align*}
We note that $\tilde{q}_d < \frac{2(d+3)}{d+1}$ (a consequence of the interpolation), so an improvement of Theorem \ref{unconditional result} is indeed obtained.  Hickman and Rogers' exponent $q_d$ can be computed (see \cite[Footnote 5 and Figure 1]{Hickman--Rogers}), and when $d \neq 2,5,7,9,11$, it  defines the best known range of local extension estimates for $d$-dimensional elliptic surfaces.  Wang \cite{Wang} has the current record for $d=2$, while the best results for $d=5,7,9,11$ are due to Guth \cite{Guth2} (both articles study paraboloids).

Additionally, the method of slicing offers a means of improving Theorem \ref{unconditional result} on the conic scaling line $p' = \frac{d-1}{d+1}q$.  Since the cross sections of $\Sigma$ are $(d-1)$-dimensional spheres, it is possible to deduce certain extension estimates for $\Sigma$ using the boundedness of the extension operator associated to $\bbS^{d-1}$.  We have the following conditional result:

\begin{proposition}\label{slicing}
If $p' = \frac{d-1}{d+1}q$ and 
\begin{align}\label{sphere bilinear range}
\bigg\|\int_{\bbS^{d-1}}e^{ix\cdot\theta}f(\theta)d\sigma(\theta)\bigg\|_{L^q(\R^d)} \lesssim \|f\|_{L^p(\bbS^{d-1})}
\end{align}
for all $f \in L^p(\bbS^{d-1})$, then $\fE(p \rightarrow q)$ holds.
\end{proposition}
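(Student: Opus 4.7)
The natural approach is a slicing argument: the level sets $\Sigma\cap\{\tau=\rho\}$ are $(d-1)$-spheres of radius $r=\sqrt{\rho^2-1}$, so one reduces to the sphere hypothesis slice-by-slice and sums using Fourier orthogonality in $t$. Concretely, parameterize $\Sigma$ by $(\rho,\theta)\in[1,\infty)\times\bbS^{d-1}$ via $(\tau,\xi)=(\rho,r\theta)$; then $d\mu=r^{d-2}\,d\rho\,d\sigma(\theta)$, and setting $\phi_\rho(\theta):=f(\rho,r\theta)$, Fubini gives
\begin{align*}
\fE f(t,x)=\int_1^\infty e^{it\rho}r^{d-2}\,\fE_{\bbS^{d-1}}\phi_\rho(rx)\,d\rho,
\end{align*}
where $\fE_{\bbS^{d-1}}$ denotes the sphere extension operator appearing in \eqref{sphere bilinear range}.

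Decompose $f=\sum_{k\geq 0}f_k$ with $f_0$ supported in $\{|\xi|\leq 2\}$ and $f_k$ supported in the dyadic frustum $\Sigma_k:=\{2^k\leq|\xi|\leq 2^{k+1}\}$ for $k\geq 1$. I would first prove the uniform bound $\|\fE f_k\|_q\lesssim \|f_k\|_p$ via the following chain: (i) Hausdorff--Young in $t$ (using $q\geq 2$) bounds $\|\fE f_k(\cdot,x)\|_{L^q_t}$ by $\|r^{d-2}\fE_{\bbS^{d-1}}\phi_\rho(rx)\|_{L^{q'}_\rho}$; (ii) Minkowski's integral inequality (using $q\geq q'$) exchanges $L^{q'}_\rho$ and $L^q_x$; (iii) the sphere hypothesis and the scaling $rx\mapsto y$ pull out a factor $r^{-d/q}$ and leave $\|\phi_\rho\|_p$; (iv) H\"older in $\rho$ (using $q'\leq p$, which is automatic from $p'\leq q$) together with the Knapp scaling $p'=\tfrac{d-1}{d+1}q$ causes the $r$-exponents to cancel exactly, leaving a weight integrand of $\langle r\rangle^{-1}$, which contributes only $O(1)$ when integrated over the dyadic piece $r\sim 2^k$. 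Next, since $\fE f_k(\cdot,x)$ has temporal Fourier support in $[\langle 2^k\rangle,\langle 2^{k+1}\rangle]$, comparable to $[2^k,2^{k+1}]$ for $k\geq 1$, Littlewood--Paley theory in $t$ combined with the triangle inequality in $L^{q/2}$ yields
\begin{align*}
\|\fE f\|_q^2\lesssim \sum_{k\geq 0}\|\fE f_k\|_q^2\lesssim \sum_{k\geq 0}\|f_k\|_p^2\leq\bigg(\sum_{k\geq 0}\|f_k\|_p^p\bigg)^{2/p}=\|f\|_p^2,
\end{align*}
where the last step uses the embedding $\ell^p\hookrightarrow\ell^2$, valid for $p\leq 2$.

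The main obstacle is that the Knapp scaling $p'=\tfrac{d-1}{d+1}q$ is exactly critical: a naive slicing estimate, without the dyadic decomposition, produces a logarithmically divergent $r$-integral near $r=\infty$. The dyadic decomposition plus Littlewood--Paley orthogonality in $t$ circumvents this, but forces $p\leq 2$ via the $\ell^p\hookrightarrow \ell^2$ step; this is precisely the natural regime in which the sphere hypothesis is known or conjectured, namely at or beyond Stein--Tomas. A minor subtlety is that the low-frequency cap $\{|\xi|\leq 2\}$ does not participate in the dyadic orthogonality, but its temporal Fourier support is bounded and the slice-wise bound handles it directly.
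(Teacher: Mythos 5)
Your slicing setup and the per-frustum estimate are correct (the exponent arithmetic on each dyadic piece does close up exactly on the line $p'=\tfrac{d-1}{d+1}q$), and the temporal Littlewood--Paley orthogonality giving $\|\fE f\|_q^2\lesssim\sum_k\|\fE f_k\|_q^2$ is legitimate for $q\geq 2$. The gap is the final step, $\sum_k\|f_k\|_p^2\leq(\sum_k\|f_k\|_p^p)^{2/p}$, which you correctly note requires $p\leq 2$ but whose cost you misjudge. On the scaling line $p'=\tfrac{d-1}{d+1}q$ for the sphere $\bbS^{d-1}\subset\R^d$, the value $p=2$ corresponds exactly to the Stein--Tomas exponent $q=\tfrac{2(d+1)}{d-1}$; hence $p\leq 2$ forces $q\geq\tfrac{2(d+1)}{d-1}>\tfrac{2(d+3)}{d+1}$, a range in which $\fE(p\to q)$ is already known unconditionally from Theorem \ref{unconditional result}. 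The entire point of Proposition \ref{slicing} --- and of its application in Section \ref{improvements} via the Hickman--Rogers estimates, which go \emph{below} Stein--Tomas --- is the regime $q<\tfrac{2(d+1)}{d-1}$, i.e.\ $p>2$. There your argument only yields $\|\fE f\|_q\lesssim(\sum_k\|f_k\|_p^2)^{1/2}$, and for $p>2$ the $\ell^2(L^p)$ norm of the dyadic pieces is genuinely larger than $\|f\|_p=\|(\|f_k\|_p)_k\|_{\ell^p}$ (take $N$ frusta with equal mass: $N^{1/2}$ versus $N^{1/p}$), so the desired conclusion does not follow. Your parenthetical that $p\leq2$ is ``precisely the natural regime'' is therefore backwards: it is precisely the regime with no content.

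The paper avoids the dyadic decomposition altogether and handles the critical summation with Lorentz spaces: the dualized Lorentz-improved Hausdorff--Young inequality $\|\check g\|_{L^q_t}\lesssim\|g\|_{L^{q',q}_s}$ replaces your step (i), a Minkowski-type inequality for mixed Lorentz norms replaces (ii), and the critical weight is absorbed by H\"older in Lorentz spaces against $\llangle s\rrangle^{-1/\alpha}\in L^{\alpha,\infty}_s$ with $\tfrac1\alpha=\tfrac{1}{q'}-\tfrac1p=\tfrac{2}{(d-1)q}$, followed by the embedding $L^{q',p}\hookrightarrow L^{q',q}$ (which only needs $p\leq q$). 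This works for all $p$ in the hypothesis, in particular for $p>2$. To repair your proof you would need either to import this Lorentz refinement, or to replace the $\ell^2$ orthogonality by an $\ell^p$ summation device for the dyadic pieces --- which disjointness of the temporal Fourier supports alone does not provide when $p>2$.
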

\begin{proof}
We proceed along the lines of arguments in \cite{Drury--Guo} and \cite{Nicola}.  For later use, we record that $q \geq \max\{2,p\}$ due to the hypothesis that \eqref{sphere bilinear range} holds.   Now, in polar coordinates, our extension operator takes the form
\begin{align*}
\fE f(t,x) = \int_0^\infty \int_{\bbS^{d-1}}e^{i(t,x)\cdot(\langle r \rangle, r\theta)}f(\langle r \rangle, r\theta)\frac{r^{d-1}}{\langle r \rangle}d\sigma(\theta)dr = \int_1^\infty e^{its}\int_{\bbS^{d-1}} e^{i\llangle s \rrangle x \cdot \theta}f(s, \llangle s \rrangle\theta)\llangle s \rrangle^{d-2}d\sigma(\theta)ds,
\end{align*}
where $\llangle s \rrangle \colonequals \sqrt{s^2-1}$.  Using the (dualized) Lorentz space version of the Hausdorff--Young inequality and a Minkowski-type inequality (see \cite[Corollary 3.16]{Stein--Weiss} and \cite[Lemma 2.1]{Nicola}, respectively), it follows that
\begin{align}\label{Hausdorff--Young, Minkowski}
\notag\|\fE f\|_q &\lesssim \bigg\|\bigg\|\int_{\bbS^{d-1}}e^{i\llangle s \rrangle x \cdot \theta}f(s,\llangle s \rrangle\theta)\llangle s \rrangle^{d-2}d\sigma(\theta)\bigg\|_{L_s^{q',q}}\bigg\|_{L_x^q}\\ &\lesssim \bigg\|\bigg\|\int_{\bbS^{d-1}}e^{i\llangle s \rrangle x \cdot \theta}f(s,\llangle s \rrangle\theta)\llangle s \rrangle^{d-2}d\sigma(\theta)\bigg\|_{L_x^q}\bigg\|_{L_s^{q',q}}.
\end{align}
By a change of variable and the estimate \eqref{sphere bilinear range}, the inner norm in \eqref{Hausdorff--Young, Minkowski} obeys the bound
\begin{align}\label{inner norm}
\bigg\|\int_{\bbS^{d-1}}e^{i\llangle s \rrangle x \cdot \theta}f(s,\llangle s \rrangle\theta)\llangle s \rrangle^{d-2}d\sigma(\theta)\bigg\|_{L_x^q} \lesssim \llangle s \rrangle^{d-2-\frac{d}{q}}\|f(s,\llangle s \rrangle\theta)\|_{L_\theta^p}.
\end{align}
Due to the embedding $L^{q',p} \hookrightarrow L^{q',q}$ and the Lorentz space version of H\"older's inequality (see \cite[Theorem 3.6]{O'Neil}), we have that
\begin{align}\label{Holder}
\notag\|\llangle s \rrangle^{d-2-\frac{d}{q}}\|f(s,\llangle s \rrangle\theta)\|_{L_\theta^p}\|_{L_s^{q',q}} &\lesssim \|\llangle s \rrangle^{d-2-\frac{d}{q}}\|f(s,\llangle s \rrangle\theta)\|_{L_\theta^p}\|_{L_s^{q',p}}\\
\notag&\lesssim \|\llangle s \rrangle^{-\frac{1}{\alpha}}\|_{L_s^{\alpha,\infty}}\|\llangle s \rrangle^{\frac{1}{\alpha}+d-2-\frac{d}{q}}\|f(s,\llangle s \rrangle\theta)\|_{L_\theta^p}\|_{L_s^{p,p}}\\
&\lesssim \|\llangle s \rrangle^{\frac{1}{\alpha}+d-2-\frac{d}{q}}\|f(s,\llangle s \rrangle\theta)\|_{L_\theta^p}\|_{L_s^{p}},
\end{align}
where $\frac{1}{\alpha} \colonequals \frac{1}{q'} - \frac{1}{p} = \frac{2}{(d-1)q}$.  By the change of variable $r \colonequals \llangle s \rrangle$ and some algebra, we find that
\begin{align}\label{change to original variable}
\|\llangle s \rrangle^{\frac{1}{\alpha}+d-2-\frac{d}{q}}\|f(s,\llangle s \rrangle\theta)\|_{L_\theta^p}\|_{L_s^{p}} = \bigg(\int_0^\infty\int_{\bbS^{d-1}}|f(\langle r \rangle, r\theta)|^p\frac{r^{d-1}}{\langle r \rangle}d\sigma(\theta)dr\bigg)^\frac{1}{p} = \|f\|_p.
\end{align}
Combining \eqref{Hausdorff--Young, Minkowski}--\eqref{change to original variable}, we conclude that $\fE(p \rightarrow q)$ holds.
\end{proof}

Since the sphere $\bbS^{d-1}$ is elliptic, \eqref{sphere bilinear range} holds in the range $q > \tilde{q}_{d-1}$ and $p' = \frac{d-1}{d+1}q$, as discussed above.  Proposition \ref{slicing} therefore yields an improvement to Theorem \ref{unconditional result} on the conic scaling line whenever
\begin{align*}
\tilde{q}_{d-1} < \frac{2(d+3)}{d+1}.
\end{align*}    
Modifying the code from \cite[Footnote 5]{Hickman--Rogers} to compute $q_{d-1}$ and then $\tilde{q}_{d-1}$, we determine that this inequality holds when $d=22,24,26$ or $28 \leq d \leq 100$.  We believe that it continues to hold for all $d > 100$; however, an asymptotic comparison of $\tilde{q}_{d-1}$ and $\frac{2(d+3)}{d+1}$ is not possible without further information about the asymptotics of $q_{d-1}$.  For the remaining low dimensions, $2 \leq d \leq 21$ and $d=23,25,27$, Hickman and Rogers' result does not improve Theorem \ref{unconditional result} on the conic line.  The stronger results of Wang \cite{Wang} and Guth \cite{Guth2} (conditionally extended to general elliptic surfaces) for $d = 2$ and $d = 5,7,9,11$ also do not yield such an improvement.


\end{document}